\theoremstyle{change}
\newtheorem{theo}{Theorem}[section]
\newtheorem*{theo*}{Theorem}
\newtheorem{lemma}[theo]{Lemma}
\newtheorem{corollary}[theo]{Corollary}
\newtheorem{prop}[theo]{Proposition}
\newtheorem{rem}[theo]{Remark}
\newtheorem{definition}[theo]{Definition}
\newtheorem{theoman}{Theorem}[section]
\newcommand{\vol}{\mathrm{vol}\,}
\newcommand{\ip}[2]{\left\langle #1,#2\right\rangle}
\newcommand{\bnorm}[1]{\left\vert#1\right\vert}
\newcommand{\Kn}{{\mathcal K}^n} 
\newcommand{\Ksn}{{\mathcal K}_{e}^n} 
\newcommand{\Knull}{{\mathcal K}_{(o)}^n} 
\newcommand{\Kcn}{{\mathcal K}_c^n} 
\newcommand{\HomF}[1]{{H(#1,n)}}
\newcommand{\R}{\mathbb{R}} 
\newcommand{\N}{\mathbb{N}} 
\newcommand{\Rn}{\mathbb{R}^n} 
\newcommand{\rat}{\mathcal{I}} 
\newcommand{\inte}{\mathrm{int}\,}  
\newcommand{\ov}{\overline}
\newcommand{\dint}{\mathrm{d}}
\newcommand{\sph}{\mathbb{S}}
\newcommand{\dcura}{\widetilde{\mathrm{C}}}
\newcommand{\haus}{\mathcal{H}}
\newcommand{\V}{\mathrm{V}}
\numberwithin{equation}{section}
\begin{document}
	
\title[Subspace concentration   for dual curvature measures]{On
  subspace concentration  for  dual curvature measures}
\author{Katharina Eller}\author{Martin Henk}
\address{Institut f\"ur Mathematik, Technische Universit\"at Berlin,
	Sekr. Ma 4-1, Strasse des 17 Juni 136, D-10623 Berlin, Germany}
\email{\{eller, henk\}@math.tu-berlin.de}
	
\begin{abstract}
We study   subspace concentration of 
dual curvature measures of convex bodies $K$ satisfying $\gamma (-K)
\subseteq K$ for some $\gamma \in (0,1]$. We present upper bounds on the subspace
concentration depending  on $\gamma$, which, in particular, retrieves
the known results in the symmetric setting. The proof is based on a unified approach to prove necessary subspace concentration conditions via the divergence theorem.
\end{abstract}
\maketitle

\section{Introduction}

Let $\Kn$ denote the set of convex bodies in $\R^n$,
i.e., the family of all convex and compact subsets
$K\subset\R^n$ with non-empty interior. The subfamily of convex bodies 
containing the origin in their interior, i.e., $0 \in
\operatorname{int}K$ is denoted by $\Knull$ 
and the subset of origin-symmetric convex bodies, i.e., the sets $K \in
\Kn$ satisfying $K=-K$,
is denoted by $\Ksn$. A convex body $K$ is called centered if its
centroid is located at the origin, i.e.,
\begin{equation*}
   \frac{1}{\vol(K)}\int_K x \, \dint \mathcal{H}^{n}(x)=0,
\end{equation*}
where, in general, $\haus^k$ denotes 
 the  $k$-dimensional Hausdorff measure, and when referring to the
$n$-dimensional volume we will write $\vol$ instead of
$\mathcal{H}^n$. The set of all  centered convex bodies in $\R^n$ is denoted by
$\Kcn$, and, in particular, we have $\Ksn\subset \Kcn\subset\Knull$.

As usual, for $x,y \in \R^n$ let $\ip{x}{y}$ denote the
standard inner product on $\R^n$, and $\bnorm x=\sqrt{\ip{x}{x}}$ the Euclidean norm of $x$.  We write
$B_n$ for the $n$-dimensional Euclidean unit ball, i.e., 
$B_n=\{ x\in\R^n : \bnorm{x} \leq 1 \}$, and $\sph^{n-1}=\partial
B_n$, where $\partial A$ is the set of boundary points of a set $A\subset\R^n$.

There are two  far-reaching extensions of the classical Brunn-Minkowski
theory, the $L_p$-Brunn-Minkowski theory and the dual Brunn-Minkowski
theory. Both of them are cornerstones of modern convex geometry and
both of them arise, roughly
speaking, by studying the volume of the sum of convex
bodies, where the usual Minkowski addition for building the sum is replaced by
another kind of addition. In the case of the $L_p$-Brunn-Minkowski
theory this is the so called $L_p$-addition, introduced by Firey
\cite{firey1962p} and Lutwak \cite{lutwak1993brunn,lutwak1993selected,lutwak1996brunn} for which we also refer to \cite[Section 9.1,
9.2]{schneider_2013}. In the dual Brunn-Minkowski theory the
so called radial addition, introduced by Lutwak \cite{lutwak1975dual},
is used (see also \cite[Section 9.3]{schneider_2013}).

One of the central problems in classical Brunn-Minkowski theory is the
 {\em Minkowski-Christoffel} problem asking for necessary and
sufficient conditions characterizing the surface area measures of a convex body among the finite Borel
measures on the sphere. For a definition of these surface area measures
and on the state of the art of the Minkowski-Christoffel problem we refer to
\cite[Chapter 8]{schneider_2013}. 

In the ground-breaking paper \cite{huang2016geometric} by Huang, Lutwak, Yang and
Zhang, the missing ``dual'' counterparts to these surfaces area measures within
the dual Brunn-Minkowski theory were introduced. They are called dual
curvature measures. In contrast to the
surface area measures, they admit an explicit integral
representation. To this end,  for $K\in\Knull$ let $\rho_K$ be the radial function, i.e., for
$x\in\R^n\setminus\{0\}$ let
\begin{equation*}
    \rho_K(x)=\max\{\rho>0 : \rho\,x\in K\}.
\end{equation*}   
Then for $q\in\R$, the $q$-th dual curvature measure of $K$ is a finite
Borel measure on $\sph^{n-1}$ given by 
\begin{equation*}
    \dcura_{K,q}(\eta)=\frac{1}{n}\int_{ \alpha^{\ast}_K(\eta)} \rho_K(u)^q \, d\mathcal{H}^{n-1}(u),
  \end{equation*}
  where for a Borel set $\eta\subseteq\sph^{n-1}$, the set  
 $\alpha^{\ast}_K(\eta)$ consists of all $u \in \sph^{n-1}$ such that
 the boundary point $\rho_K(u)u$ of $K$ has an  outer unit normal vector
 in $\eta$.

 In analogy to the above  mentioned classical Minkowski-Christoffel problem,
 the {\em dual Minkowski problem}, posed by Huang et al.~in \cite{huang2016geometric}, asks for
 necessary and sufficient conditions when a finite Borel measure $\mu$ on
 the sphere is the $q$-th dual
 curvature measure of a convex body $K\in\Knull$.

 Among these dual curvature measures there are two particular
 important measures. The $0$-th  dual curvature measure coincides up to
 a constant with  Alexandrov's integral curvature measure of the polar
 body of $K$, and the corresponding Minkowski problem, known as
 Alexandrov problem  has been solved
 by Alexandrov \cite{aleksandrov1942existence}. For extensions to the
 $L_p$ setting of the Alexandrov problem we refer to
 \cite{HuangLutwakYangEtAl2018, MR4455361} and the references within.

 The $n$-th dual curvature measure is in fact
 the cone volume measure $\V_K$ of $K$ that is
\begin{align*}
    \dcura_{K,n}(\eta)=\V_K(\eta)= \frac{1}{n} \int_{\nu_K^{-1}(\eta)}
  \ip{\nu_K(u)}{u} \dint\haus^{n-1}(u),
\end{align*}
where $\nu_K(\cdot)$ is the spherical image map (see Section 2),
essentially the Gauss map on the regular boundary points of $K$.
The characterization of the cone volume measure is known as the {\em logarithmic Minkowski
problem}. It has been studied extensively over the last few years in
many different contexts, see, e.g., \cite{ Boroczky2022,MR4286679, MR3369507, MR3509941, boroczky2016cone, boroczky2014conevolume, MR3037788,
  MR3896091,
  henk2014cone,MR4060313, MR1901250, MR2019226,MR2729006}, 
 and
for results in the general $L_p$ setting see, e.g.,
\cite{
  MR4058533, ChouWang2006a, GuoXiZhao2023, HugLutwakYangEtAl2005}.  

Regarding the dual Minkowski problem there is an obvious necessary
condition, namely the measure $\mu$ must not be concentrated on any
closed hemisphere of $\sph^{n-1}$. For $q<0$ this is surprisingly also sufficient as
shown by Yiming Zhao \cite{zhao2017dual}. For positive parameters $q$ the
behaviour  seems to be different and a quantitative "subspace
concentration" appears. In order to describe it, we set  
\begin{equation*}
                \rat(\mu,L)=\frac{\mu(\sph^{n-1}\cap L)}{\mu(\sph^{n-1})},
\end{equation*}
for a linear subspace $L\subset \R^n$, $\dim
L\geq 1$, and a non-zero finite Borel measure $\mu$ on $\sph^{n-1}$. Due to the joint efforts of Böröczky, Henk,
Huang, Lutwak, Pollehn, Yang,
Zhang, Zhao, \cite{boroczky2018subspace, boroczky2019dual, huang2016geometric, zhao2018existence},
a complete solution of the dual Minkowski
problem in the {\em even} case is known in the range $q\in(0,n)$. 
\begin{theoman}[Theorem 1.1, \cite{boroczky2019dual}] Let
  $q\in(0,n)$, and let  $\mu$ be  an {\em even} non-zero finite  Borel
  measure on $\sph^{n-1}$. Then there exists a convex body $K\in\Ksn$ such that
  $\mu=\dcura_{K,q}$ if and only if  
  \begin{equation}
    \rat(\mu,L) < \min\left\{\frac{\dim L}{q},1\right\}
   \label{eq:smi} 
  \end{equation}
  for all proper linear subspaces $L\subset\R^n$.
\label{thm:evensmallq}  
\end{theoman}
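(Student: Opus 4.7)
The plan is to handle the two directions separately: necessity is a quantitative integral estimate on origin-symmetric bodies that fits nicely with the divergence-theorem approach advertised in this paper, while sufficiency is an existence theorem proved by a variational argument in which the subspace concentration hypothesis is used precisely to prevent degeneracy of extremizers.

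For the necessity direction, I would first reduce to the polytopal case. This is possible because $K\mapsto\dcura_{K,q}$ is weakly continuous in $K\in\Knull$ and because the strict inequality in \eqref{eq:smi} is stable under approximation from within. For an origin-symmetric polytope $P$ with facets $F_1,\ldots,F_m$ and outer unit normals $v_1,\ldots,v_m$, polar coordinates on the cone $\mathrm{cone}(F_i)=\{\lambda x:\lambda\in[0,1],\,x\in F_i\}$ give
\begin{equation*}
\dcura_{P,q}(\{v_i\})=\frac{q}{n}\int_{\mathrm{cone}(F_i)}|x|^{q-n}\,\dint x,
\end{equation*}
so that $\rat(\dcura_{P,q},L)$ becomes the ratio of $\int|x|^{q-n}\dint x$ over the union of $L$-normal cones to the same integral over all of $P$. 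To compare these integrals I would apply the divergence theorem on $P$ with a vector field adapted to the splitting $\R^n=L\oplus L^\perp$, for instance $X(x)=\pi_{L^\perp}(x)\,|x|^{q-n}$, whose boundary contribution vanishes on every facet with $v_i\in L$ since $\ip{\pi_{L^\perp}x}{v_i}=0$ there. Pairing antipodal facets via $P=-P$ yields sign information on the remaining boundary terms, while the pointwise bound $|\pi_{L^\perp}x|\le|x|$ controls the interior residual. Rearranging produces the inequality $\rat(\dcura_{P,q},L)\le k/q$, with strict inequality coming from $L$ being a proper subspace of $\R^n$.

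For the sufficiency direction, I would consider the constrained minimization problem
\begin{equation*}
\inf\left\{\int_{\sph^{n-1}}\log h_K(v)\,\dint\mu(v)\,:\,K\in\Ksn,\;\widetilde{V}_q(K)=\mu(\sph^{n-1})\right\},
\end{equation*}
where $\widetilde{V}_q(K)=\frac{1}{n}\int_{\sph^{n-1}}\rho_K^q\,\dint\haus^{n-1}$ is the $q$-th dual volume. The logarithmic Wulff variational formula of Huang--Lutwak--Yang--Zhang asserts that along the family $h_{K_t}=h_K e^{t\phi}$ one has $\frac{\dint}{\dint t}\widetilde{V}_q(K_t)\big|_{t=0}=q\int\phi\,\dint\dcura_{K,q}$, so at an interior minimizer $K_0$ the Lagrange multiplier condition forces $\mu=\dcura_{K_0,q}$ after normalization. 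Existence of the minimizer reduces to a compactness argument applied to a minimizing sequence $(K_j)\subset\Ksn$.

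The main obstacle is exactly this compactness, and more specifically ruling out degeneration of the minimizing sequence onto a proper subspace. Suppose $h_{K_j}$ tended to zero uniformly on $\sph^{n-1}\setminus L$ for some proper $k$-dimensional subspace $L$; then the contribution $\int_{\sph^{n-1}\setminus L}\log h_{K_j}\,\dint\mu$ to the functional diverges to $-\infty$ at a rate determined by $(1-\rat(\mu,L))\,\mu(\sph^{n-1})$, and this loss must be compensated by growth of $h_{K_j}$ on $\sph^{n-1}\cap L$. The constraint $\widetilde{V}_q(K_j)=\mu(\sph^{n-1})$ limits this compensation: $\rho_{K_j}$ can only grow in $L$-directions at a rate whose logarithmic contribution to the functional is proportional to $k/q$, while the logarithmic loss is proportional to $\rat(\mu,L)$. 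The hypothesis $\rat(\mu,L)<k/q$ is precisely the quantitative balance that makes the loss outweigh the gain, producing the contradiction that rules out degeneration. Once the minimizer is shown to lie in $\Ksn$, the Euler--Lagrange equation finishes the argument, and the extension from discrete or smooth measures to arbitrary even Borel measures on the sphere is standard weak approximation.
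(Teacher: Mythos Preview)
This statement is Theorem~I of the paper, and the paper does \emph{not} prove it: it is quoted verbatim from \cite{boroczky2019dual} as background. Consequently there is no ``paper's own proof'' to compare against for the full if-and-only-if assertion. What the present paper does is recover a weakened form of the \emph{necessity} direction as a special case of its new machinery: setting $\gamma=1$ in Theorem~\ref{theo:maingeneralbodies} (equivalently, Theorem~\ref{thm:Gaussdiv} combined with Lemma~\ref{lem: bound on gradient, quasiconcave case}) gives the non-strict bound $\rat(\dcura_{K,q},L)\le \dim L/q$ for $K\in\Ksn$, but only in the range $q>\dim L+1$; the paper explicitly flags both the missing strictness and the missing interval $\dim L<q\le\dim L+1$ as artefacts of its more general framework. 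The sufficiency direction is not touched at all here.

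Given that, your sketch is aiming at a different target than what the paper provides. Two substantive remarks on the sketch itself. First, your polytope reduction for necessity is unsound as written: weak continuity of $K\mapsto\dcura_{K,q}$ lets you pass non-strict inequalities to the limit, but strict inequalities of the form \eqref{eq:smi} do not survive approximation, so proving the strict bound for polytopes and then taking limits only yields $\le$. The literature proofs of strictness (in \cite{boroczky2018subspace,henk2017necessary}) work directly with general bodies and extract strictness from a separate equality-case analysis. Second, your divergence-theorem outline operates on $K\subset\R^n$ with the vector field $X(x)=\pi_{L^\perp}(x)\,|x|^{q-n}$, whereas the paper's mechanism is to integrate by parts on the \emph{projection} $K|L$ using the section function $g_{K,L,q}$ of \eqref{eq:deffunctiong} and the vector field $G(x)=g_{K,L,q}(x)\,x$ (Theorem~\ref{thm: Gauss div gen}); this localizes the analysis to a $\dim L$-dimensional problem and is what allows the pointwise gradient bound of Lemma~\ref{lem: bound on gradient, quasiconcave case}. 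Your vector field $X$ does kill the $L$-normal facets, but the claim that antipodal pairing and $|\pi_{L^\perp}x|\le|x|$ then ``rearrange'' to $\rat\le k/q$ is not justified: computing $\operatorname{div}X=(n-k)|x|^{q-n}+(q-n)|\pi_{L^\perp}x|^2|x|^{q-n-2}$ shows the interior term does not collapse to a clean multiple of $\int_K|x|^{q-n}$, and symmetry alone does not give a sign on the remaining boundary contribution. Your sufficiency sketch via the logarithmic minimization is indeed the strategy of \cite{boroczky2019dual,zhao2018existence}, but the paper under review makes no attempt in that direction.
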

For $q=n$, i.e., for the log-Minkowski problem a complete solution in the even case was given by Böröczky, Lutwak,
Yang and Zhang.
\begin{theoman}[Theorem 1.1, \cite{boroczky2013logarithmic}] \label{thmmain: sci cone volume} Let  $\mu$ be  an {\em even} non-zero finite  Borel
  measure on $\sph^{n-1}$. Then there exists a convex body $K\in\Ksn$ such that
  $\mu=\dcura_{K,n}$ if and only if
  \begin{equation}
                 \rat(\mu,L) \leq  \frac{\dim L}{n}, 
   \label{eq:scc}
  \end{equation}
  for all proper linear subspaces $L\subset\R^n$, and whenever equality
  holds in \eqref{eq:scc} for some $L$ then there exists a complementary subspace
  $L'$ such that $\mu$ is concentrated on $(L\cup
  L')\cap \sph^{n-1}$.
\label{thm:conesym}  
\end{theoman}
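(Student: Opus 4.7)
The plan is to split the proof into the necessity of \eqref{eq:scc}, the analysis of its equality case, and the sufficiency direction. For necessity and equality I would follow the Fubini-plus-divergence-theorem spirit of the present paper; for sufficiency I would employ the classical variational method of Böröczky--Lutwak--Yang--Zhang. By weak continuity of $K\mapsto\V_K$ it suffices to restrict attention to the polytopal case throughout the necessity argument.

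For a symmetric polytope $P\in\Ksn$, a proper subspace $L\subset\R^n$ of dimension $k$, and $y\in P|L$, set $f(y)=\haus^{n-k}(P\cap(y+L^\perp))$. Only facets $F_i$ with outer unit normal $u_i\in L$ contribute to $\V_P(\sph^{n-1}\cap L)$, and for each such facet the entire fibre $y+L^\perp$ lies in the supporting hyperplane at $u_i$ whenever $y\in\pi_L(F_i)$; the coarea formula then yields $\haus^{n-1}(F_i)=\int_{\pi_L(F_i)}f(y)\,\dint\haus^{k-1}(y)$. Summing over facets, using $h_P(u_i)=\ip{y}{u_i}$ on $\pi_L(F_i)$, and applying the divergence theorem inside $L$ to the vector field $y\mapsto y\,f(y)$, I obtain
\begin{equation*}
n\,\V_P(\sph^{n-1}\cap L)=k\,\vol(P)+\int_{P|L}\ip{y}{\nabla f(y)}\,\dint y.
\end{equation*}
The Brunn--Minkowski inequality makes $f^{1/(n-k)}$ concave on $P|L$, and $P=-P$ makes it even, so it attains its maximum at $y=0$ and concavity forces $\ip{y}{\nabla f(y)}\le 0$ pointwise. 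Hence $\V_P(\sph^{n-1}\cap L)\le \frac{k}{n}\vol(P)$, which is \eqref{eq:scc}.

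For the equality case, equality in \eqref{eq:scc} forces $\ip{y}{\nabla f(y)}=0$ almost everywhere on $P|L$, and together with concavity of $f^{1/(n-k)}$ and its maximum at the origin this makes $f$ constant. The equality case of the Brunn--Minkowski inequality then says that the fibres $P\cap(y+L^\perp)$ are all translates of $F_0:=P\cap L^\perp$ by a map $\ell:P|L\to L^\perp$, and $P=-P$ makes $\ell$ linear. The resulting product structure $P=\{y+\ell(y):y\in P|L\}+F_0$ constrains every facet normal of $P$ to lie either in $L$ or in $L':=\{y+\ell(y):y\in L\}^\perp$, an $(n-k)$-dimensional subspace complementary to $L$; hence $\V_P=\mu$ is concentrated on $(L\cup L')\cap\sph^{n-1}$. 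Approximating a general $K\in\Ksn$ by symmetric polytopes transfers both the inequality and its equality characterisation to the limit.

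For the sufficiency direction, given an even $\mu$ satisfying \eqref{eq:scc} I would minimise $\Phi(K)=\int_{\sph^{n-1}}\log h_K\,\dint\mu$ over $K\in\Ksn$ with $\vol(K)=1$. The subspace concentration bound is precisely what precludes a minimising sequence from degenerating towards any proper subspace --- the borderline equality case being handled by inductively invoking the theorem within the splitting subspaces --- so Blaschke selection produces a minimiser $K^{*}\in\Ksn$ whose Aleksandrov-type Euler--Lagrange equation $\V_{K^{*}}=c\,\mu$ followed by a rescaling yields the required body. The main obstacle is the equality analysis: turning the a.e.\ identity $\ip{y}{\nabla f}=0$ into a rigid affine-twist decomposition of $P$ requires the equality case of Brunn--Minkowski together with the origin-symmetry, and it is this step that produces the twisted complementary $L'$ rather than simply $L^\perp$.
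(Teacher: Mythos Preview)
This theorem is not proved in the present paper: it is quoted from \cite{boroczky2013logarithmic} as background, and the only related argument the paper supplies is the remark after Theorem~\ref{thm:Gaussdiv} that for $q=n$ the section function $g_{K,L,n}$ is log-concave, whence $\int_{K|L}\ip{\nabla g_{K,L,n}(x)}{x}\,\dint\haus^{\dim L}(x)\le 0$ (attributed to \cite{boroczky2016cone}). There is therefore no ``paper's own proof'' to compare your proposal against.

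That said, your necessity argument is entirely in the paper's spirit: your divergence identity for polytopes is the $q=n$, $\varphi\equiv 1$ instance of Theorem~\ref{thm:Gaussdiv}, and your Brunn--Minkowski step is exactly the log-concavity observation just cited. Your sufficiency sketch is the variational scheme of the original source \cite{boroczky2013logarithmic}.

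There is, however, a genuine gap in your treatment of the equality case. You establish the splitting only for polytopes and then assert that ``approximating a general $K\in\Ksn$ by symmetric polytopes transfers \dots\ its equality characterisation to the limit.'' This does not work: if a general $K$ attains equality in \eqref{eq:scc}, approximating polytopes $P_m$ will typically have strict inequality, so your polytopal rigidity statement says nothing about them; and even if each $P_m$ happened to split, the complementary subspaces $L'_m$ need not stabilise. Equality cases do not, as a rule, survive limits. The equality analysis for general bodies has to be carried out directly on $K$ itself---one applies the equality condition in Brunn--Minkowski to the sections $K\cap(y+L^\perp)$ of $K$, not of an approximant---as is done in \cite{boroczky2016cone} (and, for the symmetric polytopal case, in \cite{henk2014cone}).
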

For $q>n$ the dual (even) Minkowski problem is open, some necessary
conditions are known, however, at least for $q>n+1$.   
\begin{theoman}[Theorem 1.7, \cite{henk2017necessary}]  Let  $q>n+1$ and
  $K\in\Ksn$. Then  
  \begin{equation}
                 \rat(\dcura_{K,q},L) <  \frac{\dim L+q-n}{q}, 
                 \label{eq:smin}
  \end{equation}
  for all proper linear subspaces $L\subset\R^n$.
\label{thm:evenlargeq}    
\end{theoman}
This inequality is best possible 
and it is likely to be sufficient as well. For $n=2$, \eqref{eq:smin} holds
even true for $q>2$.

In the non-even case we know only very little for $q>0$. In fact, only
the case $q=n$ (cone-volume measure) has been studied in this respect
and even there  we do not have matching necessary and sufficient
conditions. For centered convex bodies it was shown by Böröczky and
Henk \cite{boroczky2016cone} (see also \cite{henk2014cone} for the polytopal case) that \eqref{eq:scc} is also necessary.
\begin{theoman}[Theorem 1.3, \cite{boroczky2016cone}] \label{thm:conecentered} Let  $K\in\Kcn$. Then
  \begin{equation*}
                 \rat(\dcura_{K,n},L) \leq  \frac{\dim L}{n}, 
  \end{equation*}
  for all proper linear subspaces $L\subset\R^n$, and whenever equality
  holds for some $L$ then there exists a complementary subspace
  $L'$ such that $\mu$ is concentrated on $(L\cup
  L')\cap \sph^{n-1}$.
\end{theoman}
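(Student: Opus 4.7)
The plan is to apply the divergence theorem in the projection $Q := \pi_L(K) \subset L$ and couple the centeredness assumption with the Brunn--Minkowski concavity of the slice function. By standard approximation (cone-volume measure is continuous in the Hausdorff topology on $\Kcn$), I would first reduce to smooth strictly convex $K$. Fix a proper subspace $L$ of dimension $k$ and write $\R^n = L \oplus L^\perp$. Set $f(y) = \haus^{n-k}(K \cap (y + L^\perp))$ on $Q$, so that $\vol(K) = \int_Q f\,dy$, and recall that by Brunn--Minkowski $g := f^{1/(n-k)}$ is concave on $Q$.

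Next I would show that the portion of $\partial K$ with outer normal in $L$ decomposes as the union of the fibres over $y \in \partial Q$, which yields
\begin{equation*}
    n\,\dcura_{K,n}(L \cap \sph^{n-1}) = \int_{\partial Q} \langle y, u_Q(y)\rangle f(y)\, d\haus^{k-1}(y),
\end{equation*}
where $u_Q$ is the outer unit normal of $Q$ in $L$. Applying the divergence theorem in $L$ to the vector field $y \mapsto y f(y)$ then produces
\begin{equation*}
    n\,\dcura_{K,n}(L \cap \sph^{n-1}) - k\, \vol(K) = \int_Q \langle y, \nabla f(y)\rangle\, dy,
\end{equation*}
so the entire inequality reduces to showing the right-hand side is non-positive.

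For this final step I would write $p = n - k$, so the target integral equals $p\int_Q g^{p-1}\langle y, \nabla g\rangle\,dy$, and combine two consequences of the concavity of $g$. First, the supporting hyperplane at $y$ evaluated at the origin gives $\langle y, \nabla g(y)\rangle \leq g(y) - g(0)$; multiplying by $g^{p-1}\geq 0$ and integrating bounds the target by $\int g^p - g(0)\int g^{p-1}$. Second, the supporting hyperplane at the origin gives $g(y) \leq g(0) + \langle \nabla g(0), y\rangle$; multiplied by $f(y) = g(y)^p$ and integrated, the linear term vanishes by the centeredness $\int_Q y\,f(y)\,dy = 0$ (inherited from $\int_K x\,dx = 0$), producing $\int g^{p+1}\leq g(0)\int g^p$. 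A Cauchy--Schwarz inequality $(\int g^p)^2 \leq (\int g^{p+1})(\int g^{p-1})$ then delivers $\int g^p \leq g(0)\int g^{p-1}$, which combined with the first bound gives $\int_Q \langle y, \nabla f\rangle\, dy \leq 0$ as required.

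The main obstacle I anticipate is precisely this last coupling of centeredness and $1/(n-k)$-concavity: the first concavity step requires a bound of $\int g^p$ by $g(0)\int g^{p-1}$, whereas the centeredness condition most naturally controls $\int g^{p+1}$, and the Cauchy--Schwarz bridge between these different powers of $g$ is what makes the argument close. For the equality case I would trace back: equality in Cauchy--Schwarz forces $g$ to be constant on the support of $f$, so $K$ becomes a cylinder $Q \times K_0$ with $K_0 \subset L^\perp$, and then $\dcura_{K,n}$ is supported on $(L \cup L^\perp)\cap \sph^{n-1}$, supplying the complementary subspace $L' = L^\perp$ required by the statement.
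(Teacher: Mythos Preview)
Your overall strategy---divergence theorem in $Q=K|L$ to reduce the subspace concentration to the sign of $\int_Q\langle y,\nabla f\rangle\,dy$, then exploit concavity of the slice function together with the centroid condition $\int_Q y\,f(y)\,dy=0$---is exactly the approach of \cite{boroczky2016cone} as summarized in the paper (see the discussion after Theorem~\ref{thm:Gaussdiv}). Your chain of inequalities is correct; in particular the Cauchy--Schwarz bridge $(\int g^p)^2\le(\int g^{p+1})(\int g^{p-1})$ together with $\int g^{p+1}\le g(0)\int g^p$ does yield $\int g^p\le g(0)\int g^{p-1}$ and hence the claim. The paper's route is slightly leaner: instead of the $1/(n-k)$-concavity of $f$ it uses only the weaker \emph{log}-concavity of $g_{K,L,n}=f$, writing $\langle y,\nabla f\rangle=f\,\langle y,\nabla\log f\rangle$ and applying the two supporting-hyperplane inequalities to $\log f$; then centeredness kills the linear term directly and no Cauchy--Schwarz step is needed. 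Both arguments prove the same inequality; yours uses a stronger hypothesis and one extra step.

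There is, however, a genuine gap in your equality analysis. Equality in your chain forces $g$ constant on $\operatorname{int}Q$, i.e.\ all sections $K\cap(y+L^\perp)$ have the same $(n-k)$-volume. This does \emph{not} imply $K=Q\times K_0$ with $K_0\subset L^\perp$: already a sheared parallelogram in $\R^2$ (sections of constant length but centers moving affinely) is a counterexample, and then the complementary subspace $L'$ carrying the rest of the cone-volume measure is \emph{not} $L^\perp$. What constant $g$ does give, via the equality case of Brunn--Minkowski along every segment in $Q$, is that all sections are \emph{translates} of one another; convexity of $K$ then forces the translation map $y\mapsto t(y)\in L^\perp$ to be affine, so $K$ is a Minkowski sum of a $k$-dimensional body in the graph subspace $L''=\{y+t(y):y\in L\}$ and an $(n-k)$-dimensional body in $L^\perp$. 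The cone-volume measure is then supported on $(L\cup L')\cap\sph^{n-1}$ with $L'=(L'')^\perp$, which is complementary to $L$ but in general different from $L^\perp$. You should replace the last two sentences of your proposal by this sheared-cylinder argument.
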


The proof of the necessity of the inequalities in Theorems
\ref{thm:evensmallq}, \ref{thm:evenlargeq}, \ref{thm:conecentered} are
based on three different approaches.  
The main purpose  of this paper is i) to unify these approaches
and ii)  based on this unification to establish  first results on the
subspace concentration of the dual curvature measures of arbitrary
bodies  $K\in\Knull$.    

\begin{theo}\label{theo:maingeneralbodies}
Let $K \in \Knull $, $\gamma \in (0,1]$ such that $\gamma (-K) \subseteq
K$. Let $L \subset \mathbb{R}^n$ be a proper subspace and let
$q\in\R$ with  $q>\dim L+1$.  Then
\begin{equation*}
  \rat(\dcura_{K,q},L) \leq
  \begin{cases}
    &\frac{\dim(L)+\frac{1-\gamma}{1+\gamma}(q-\dim(L))}{q}, q\leq n, 
    \\
    &\frac{(\dim(L)+q-n)+ \frac{1-\gamma}{1+\gamma}(n-\dim(L))}{q}, q>n+1.
   \end{cases} 
\end{equation*}   
\end{theo}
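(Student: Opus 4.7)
The plan is to derive a master identity for $\dcura_{K,q}(L\cap\sph^{n-1})$ via the divergence theorem applied on $K$ to the vector field $F(x)=|x|^{q-n}\pi_L(x)$, where $\pi_L$ is the orthogonal projection onto $L$, and then to control the resulting error term, coming from boundary points whose outer normal does not lie in $L$, by means of the asymmetry hypothesis $\gamma(-K)\subseteq K$.

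I would begin from the integral representation
\begin{equation*}
n\dcura_{K,q}(L\cap\sph^{n-1})=\int_{\nu_K^{-1}(L\cap\sph^{n-1})}|x|^{q-n}\ip{x}{\nu_K(x)}\dint\haus^{n-1}(x),
\end{equation*}
together with the total mass identity $n\dcura_{K,q}(\sph^{n-1})=q\int_K|x|^{q-n}\dint x$, the latter following from the divergence theorem applied to the radial field $|x|^{q-n}x$ whose divergence equals $q|x|^{q-n}$. Writing $k=\dim L$, a direct computation yields
\begin{equation*}
\operatorname{div}\bigl(|x|^{q-n}\pi_L(x)\bigr)=k\,|x|^{q-n}+(q-n)\,|x|^{q-n-2}|\pi_L(x)|^2.
\end{equation*}
After a standard smooth approximation, the divergence theorem together with the observation that $\ip{\pi_L(x)}{\nu_K(x)}=\ip{x}{\nu_K(x)}$ on $S_L:=\nu_K^{-1}(L\cap\sph^{n-1})$ produces the master identity
\begin{equation*}
n\dcura_{K,q}(L\cap\sph^{n-1})+R_L=k\int_K|x|^{q-n}\dint x+(q-n)\int_K|x|^{q-n-2}|\pi_L(x)|^2\dint x,
\end{equation*}
with $R_L:=\int_{\partial K\setminus S_L}|x|^{q-n}\ip{\pi_L(x)}{\nu_K(x)}\dint\haus^{n-1}(x)$.

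Because $|\pi_L(x)|^2\leq|x|^2$, the $(q-n)$-term on the right-hand side is bounded by $0$ when $q\leq n$ and by $(q-n)\int_K|x|^{q-n}\dint x$ when $q>n$. Dividing the resulting inequality by $n\dcura_{K,q}(\sph^{n-1})=q\int_K|x|^{q-n}\dint x$ gives
\begin{equation*}
\rat(\dcura_{K,q},L)\leq\frac{\dim L}{q}-\frac{R_L}{q\int_K|x|^{q-n}\dint x}\quad\text{for }q\leq n,
\end{equation*}
and the analogous estimate $\rat(\dcura_{K,q},L)\leq(\dim L+q-n)/q-R_L/(q\int_K|x|^{q-n}\dint x)$ for $q>n$. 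In the symmetric case $\gamma=1$, one expects $R_L\geq0$ and then dropping the last term recovers precisely the bounds of Theorems~\ref{thm:evensmallq} and~\ref{thm:evenlargeq}; for general $\gamma\in(0,1]$ the task is to furnish an explicit lower bound on $R_L$ which degrades proportionally as $\gamma$ decreases.

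The main obstacle is to prove the sharp estimate
\begin{equation*}
-R_L\leq\frac{1-\gamma}{1+\gamma}\bigl(\min\{q,n\}-\dim L\bigr)\int_K|x|^{q-n}\dint x,
\end{equation*}
which, inserted into the two preceding inequalities, produces exactly the two displayed bounds of the theorem. The hypothesis $\gamma(-K)\subseteq K$ gives $h_K(\nu)\geq\gamma h_K(-\nu)$ and $\rho_K(-u)\leq\gamma^{-1}\rho_K(u)$, equivalently the symmetric form $|h_K(\nu)-h_K(-\nu)|\leq\frac{1-\gamma}{1+\gamma}\bigl(h_K(\nu)+h_K(-\nu)\bigr)$, in which the factor $\frac{1-\gamma}{1+\gamma}$ appears naturally. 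I expect to obtain the bound on $R_L$ by rewriting the integrand via the spherical image map, pairing each outer normal $\nu\notin L$ with $-\nu$, and combining the resulting paired contributions with the above asymmetry inequalities and with $|\pi_L(\nu)|\leq1$. The delicate point is producing the correct combinatorial factor $\min\{q,n\}-\dim L$ simultaneously in both regimes; the gap at $q\in(n,n+1]$ in the hypotheses strongly suggests that an auxiliary integrand arising in this estimate changes sign there, forcing the argument to split into the separate cases $q\leq n$ and $q>n+1$.
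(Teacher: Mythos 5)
Your master identity is correct: applying the divergence theorem on all of $K$ to the vector field $F(x)=|x|^{q-n}\pi_L(x)$, with $\operatorname{div}F=k|x|^{q-n}+(q-n)|x|^{q-n-2}|\pi_L(x)|^2$, does produce
$n\dcura_{K,q}(L\cap\sph^{n-1})+R_L=k\int_K|x|^{q-n}\dint x+(q-n)\int_K|x|^{q-n-2}|\pi_L(x)|^2\dint x$,
and this is a genuinely different decomposition from the one in the paper, which works in the $k$-dimensional projection $K\vert L$ with the field $g_{K,L,q}(x)\,x$ and isolates the ``domain-variation'' contribution as $\int_{K\vert L}\ip{\nabla g_{K,L,q}(x)}{x}\dint\haus^{k}(x)$. (Both approaches face the same regularity issue near $0$; your $F$ is even unbounded when $q<n-1$, so the ``standard smooth approximation'' would need the hypothesis $q>\dim L+1$ to run, exactly as the paper's gradient argument does.)

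The gap is that the entire content of the theorem is packed into the inequality $-R_L\leq\frac{1-\gamma}{1+\gamma}(\min\{q,n\}-\dim L)\int_K|x|^{q-n}\dint x$, and you do not prove it; you only sketch a plan of pairing each outer normal $\nu\notin L$ with $-\nu$. For a non-symmetric body that pairing is not a change of variables on $\partial K$: the boundary points $x_\nu$ and $x_{-\nu}$ with normals $\nu$ and $-\nu$ are not antipodal, and the Jacobians of the radial/Gauss parametrizations at these two points are unrelated, so the contributions do not combine in any transparent way, and in particular the factor $\min\{q,n\}-\dim L$ is not visible from the pointwise asymmetry inequalities $h_K(\nu)\geq\gamma h_K(-\nu)$, $\rho_K(-u)\geq\gamma\rho_K(u)$ alone. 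The paper extracts the $\frac{1-\gamma}{1+\gamma}$ factor by an entirely different mechanism: from $\gamma(-K)\subseteq K$ one has, for every slice $K_x=K\cap(x+L^\perp)$ and every $\lambda\in[0,1]$, the inclusion $\frac{\gamma+\lambda}{1+\gamma}K_x+\frac{(1-\lambda)\gamma}{1+\gamma}(-K_x)\subseteq K_{\lambda x}$, and then an Anderson-type rearrangement inequality (Lemma~\ref{lem: Anderson for quasiconcave} for $q\leq n$, Lemma~\ref{lem: Anderson convex} built on \cite{henk2017necessary} for $q\geq n+1$) converts this inclusion into the pointwise bound $\ip{\nabla g_{K,L,q}(x)}{x}\leq c_q\,g_{K,L,q}(x)$ with $c_q$ as in Theorem~\ref{thm:boundssection}; integrating over $K\vert L$ and dividing by $\dcura_{K,q}(\sph^{n-1})$ finishes. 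There is no analogue of this slice-by-slice argument in your proposal, and without it the bound on $R_L$ is an unproven conjecture. Until that estimate is actually established, the proposal does not prove the theorem.
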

For $K\in\Ksn$, i.e., $\gamma=1$, this theorem implies essentially the necessity parts of
Theorem \ref{thm:evensmallq} and Theorem \ref{thm:evenlargeq}. The
additional restriction $q>\dim L+1$ (instead of $q>\dim L$) in the
range $q\leq n$ is caused by our more general approach, but is likely to be not necessary. As for centered
convex bodies $K\in\Kcn$, the asymmetry parameter $\gamma$ in the
theorem above may be chosen to be at least $1/n$ (cf. \cite{hammer1951centroid, suss1948affininvariante}) we get as a corollary
\begin{corollary} Let $K \in \Kcn$. Let $L \subset \mathbb{R}^n$ be a proper subspace and let
$q\in\R$ with  $q>\dim L+1$.  Then
\begin{equation*}
  \rat(\dcura_{K,q},L) \leq
  \begin{cases}
    &\frac{\dim(L)+\frac{n-1}{n+1}(q-\dim(L))}{q}, q\leq n, 
    \\
    &\frac{(\dim(L)+q-n)+ \frac{n-1}{n+1}(n-\dim(L))}{q}, q>n+1.
   \end{cases} 
\end{equation*}   
\end{corollary}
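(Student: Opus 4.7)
The plan is to reduce the corollary to a direct application of Theorem \ref{theo:maingeneralbodies}. I first observe that the bound stated there is monotone decreasing in $\gamma$, since $\gamma\mapsto (1-\gamma)/(1+\gamma)$ is a decreasing function on $(0,1]$. Hence it suffices to produce, for every $K\in\Kcn$, an admissible value of $\gamma$ that is as large as possible.

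For this I would invoke the classical inclusion of Hammer and Süss \cite{hammer1951centroid, suss1948affininvariante}, according to which any centered convex body $K\in\Kcn$ satisfies $-K\subseteq nK$, or equivalently $\frac{1}{n}(-K)\subseteq K$. Thus $\gamma=1/n$ is an admissible choice of asymmetry parameter in Theorem \ref{theo:maingeneralbodies}.

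Substituting this value,
\[
\frac{1-\gamma}{1+\gamma}=\frac{1-1/n}{1+1/n}=\frac{n-1}{n+1},
\]
so both cases of Theorem \ref{theo:maingeneralbodies} specialise to exactly the inequalities asserted in the corollary. There is essentially no obstacle here: all the analytic content is carried by Theorem \ref{theo:maingeneralbodies}, and the Hammer–Süss inclusion is sharp (the simplex with centroid at the origin achieves equality), so $\gamma=1/n$ is the best value attainable by this substitution for the full class $\Kcn$.
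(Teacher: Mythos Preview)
Your proposal is correct and matches the paper's own justification: the corollary is obtained directly from Theorem~\ref{theo:maingeneralbodies} by invoking the Hammer--S\"uss inclusion $\frac{1}{n}(-K)\subseteq K$ for $K\in\Kcn$ and substituting $\gamma=1/n$. The monotonicity remark is a harmless elaboration but not strictly needed, since $\gamma=1/n$ is itself an admissible value for every centered body.
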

Numerical results indicate that for $K\in\Kcn$ the
same inequalities hold true as in the even case.
With our approach, as we will see this amounts to control a certain
integral of directional derivatives, which we can handle efficiently only in case
$q=n$ leading to Theorem \ref{thm:conecentered}.

In order to describe our approach which is based on
\cite{boroczky2016cone, boroczky2018subspace, henk2017necessary} we need some
more notation. For $K\in\Knull$, $L\subset\R^n$ a proper subspace and
$q\in\R$ with $q>\dim L$ let
\begin{equation}\label{eq:deffunctiong}
     g_{K,L,q}(x) = \int_{K \cap (x+L^{\perp})} \bnorm{z}^{q-n} \, \dint \haus^{n-\dim L}(z)
\end{equation}
where $x\in K|L$, i.e., $x$ belongs to the orthogonal projection
of $K$ onto $L$, and $L^\perp$ is the orthogonal complement of $L$.
 For $q<n$ the integrand displays a singularity at the origin and is
unbounded. However as long as we require $q>\dim L$ the integral exists.

By applying a generalized divergence theorem from \cite{pfeffer2012divergence} and establishing
regularity properties of the section function $g_{K,L,q}$ we will
show
\begin{theo}\label{thm:Gaussdiv}
Let $K \in \Knull$, $L\subset \R^n$ be a proper subspace and let $q\in\R$
with $q > \dim L+1$.
Then it holds
\begin{equation*}
 \rat(\dcura_{K,q},L)  = \frac{\dim L}{q} +\frac{1}{n}\frac{1}{ \dcura_{K,q}(\sph^{n-1})}
 \int_{K \vert L}\ip{\nabla
   g_{K,L,q}(x)}{x} \, \dint \haus^{\dim L}(x).
\end{equation*}
\end{theo}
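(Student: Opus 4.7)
The plan is to derive the identity by combining two independent facts and applying the divergence theorem. The first fact rewrites $\dcura_{K,q}(\sph^{n-1}\cap L)$ as a weighted surface integral over $\bd(K\vert L)$ involving the section function $g_{K,L,q}$; the second applies the divergence theorem on $K\vert L\subset L$ to the vector field $F(x)=g_{K,L,q}(x)\,x$.

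For the first fact, I start from the boundary-integral form of the dual curvature measure obtained from the radial-to-boundary change of variables $u\mapsto\rho_K(u)u$:
\begin{equation*}
 \dcura_{K,q}(\eta)=\frac{1}{n}\int_{\nu_K^{-1}(\eta)}\bnorm{z}^{q-n}\ip{z}{\nu_K(z)}\,\dint\haus^{n-1}(z).
\end{equation*}
Specialising to $\eta=\sph^{n-1}\cap L$, the key geometric observation is that for $\haus^{\dim L-1}$-a.e.\ $x\in\bd(K\vert L)$ with (unique) outer normal $\nu\in L$ in $L$, one has $\ip{z}{\nu}=\ip{x}{\nu}=h_{K\vert L}(\nu)=h_K(\nu)$ for every $z\in K\cap(x+L^\perp)$, so the entire fibre lies on the supporting hyperplane of $K$ with normal $\nu$; in particular it sits in $\bd K$ with normal $\nu$, and there $\ip{z}{\nu_K(z)}=\ip{x}{\nu_{K\vert L}(x)}$. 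Up to a null set, $\nu_K^{-1}(\sph^{n-1}\cap L)$ is therefore the cylindrical set $\bigcup_{x\in\bd(K\vert L)}K\cap(x+L^\perp)$, and since the fibres sit in $L^\perp$ while the base lies in $L$, the restriction of $\haus^{n-1}$ factors as the product of $\haus^{\dim L-1}$ on $\bd(K\vert L)$ and $\haus^{n-\dim L}$ on each fibre. Carrying out the slicing gives
\begin{equation*}
 n\,\dcura_{K,q}(\sph^{n-1}\cap L)=\int_{\bd(K\vert L)}g_{K,L,q}(x)\,\ip{x}{\nu_{K\vert L}(x)}\,\dint\haus^{\dim L-1}(x).
\end{equation*}

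For the second fact, I apply the divergence theorem on the Lipschitz domain $K\vert L\subset L$ to $F(x)=g_{K,L,q}(x)\,x$. Since $\mathrm{div}\,F(x)=(\dim L)\,g_{K,L,q}(x)+\ip{\nabla g_{K,L,q}(x)}{x}$, this yields
\begin{equation*}
 \int_{\bd(K\vert L)}g_{K,L,q}(x)\,\ip{x}{\nu_{K\vert L}(x)}\,\dint\haus^{\dim L-1}(x)=(\dim L)\!\int_{K\vert L}g_{K,L,q}(x)\,\dint\haus^{\dim L}(x)+\int_{K\vert L}\ip{\nabla g_{K,L,q}(x)}{x}\,\dint\haus^{\dim L}(x).
\end{equation*}
Fubini's theorem (slicing $K$ by hyperplanes parallel to $L^\perp$) followed by polar coordinates supplies the normalisation
\begin{equation*}
 \int_{K\vert L}g_{K,L,q}(x)\,\dint\haus^{\dim L}(x)=\int_{K}\bnorm{z}^{q-n}\,\dint z=\frac{n}{q}\,\dcura_{K,q}(\sph^{n-1}).
\end{equation*}
Substituting this together with the first identity into the divergence equation and dividing by $n\,\dcura_{K,q}(\sph^{n-1})$ produces the claimed formula.

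The main obstacle is justifying the divergence theorem with the regularity actually available. The domain $K\vert L$ has only Lipschitz boundary, and $g_{K,L,q}$ is not in general classically $C^1$: it may blow up or fail to be smooth near $\bd(K\vert L)$, and the integrand $\bnorm{z}^{q-n}$ defining $g_{K,L,q}$ is itself singular when $0\in\inte(K\vert L)$. This is precisely why the paper invokes the generalised divergence theorem of \cite{pfeffer2012divergence}; the technical heart of the argument is to verify that $g_{K,L,q}$ has the required Sobolev-type regularity and that $\ip{\nabla g_{K,L,q}(\cdot)}{\cdot}$ is integrable on $K\vert L$. The hypothesis $q>\dim L+1$ should be exactly the integrability threshold making this step go through, since differentiating under the integral sign in \eqref{eq:deffunctiong} introduces a factor of order $\bnorm{z}^{q-n-1}$, which is locally integrable on an $(n-\dim L)$-dimensional slice through the origin precisely when $q-\dim L-1>0$; the boundary behaviour of $g_{K,L,q}$ near $\bd(K\vert L)$ can then be handled by approximating $K$ by smooth convex bodies bounded away from the origin and a dominated-convergence argument.
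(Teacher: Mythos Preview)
Your proposal is correct and follows essentially the same route as the paper: express $n\,\dcura_{K,q}(\sph^{n-1}\cap L)$ as the boundary integral $\int_{\bd(K\vert L)} g_{K,L,q}(x)\ip{x}{\nu_{K\vert L}(x)}\,\dint\haus^{\dim L-1}(x)$ via the boundary form of the dual curvature measure and slicing, apply the divergence theorem to $F(x)=g_{K,L,q}(x)\,x$ on $K\vert L$, and use $\int_{K\vert L}g_{K,L,q}=\tfrac{n}{q}\dcura_{K,q}(\sph^{n-1})$; you also correctly identify the role of $q>\dim L+1$ as the integrability threshold for $\ip{\nabla g_{K,L,q}(\cdot)}{\cdot}$. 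The only notable technical difference is in the limiting procedure: the paper does not approximate $K$ by smooth bodies but instead keeps $K$ fixed and applies Pfeffer's divergence theorem on the shrunken domains $E_m=e^{-1/m}(K\vert L)\subset\inte(K\vert L)$ (where $g_{K,L,q}$ is locally Lipschitz away from the origin by Proposition~\ref{prop : g Lip, diff, grad bound}), then passes to the limit $m\to\infty$ using the radial continuity $g_{K,L,q}(e^{-1/m}x)\to g_{K,L,q}(x)$ of Lemma~\ref{lem: g contninuous interior}\,iii); this sidesteps the issue that $g_{K,L,q}$ is in general only upper semicontinuous on $\bd(K\vert L)$.
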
 
For $q=n$ the  function $g_{K,L,n}$ is log-concave and based on this 
property it was shown  in \cite{boroczky2016cone} that  $\int_{K \vert L}\ip{\nabla
   g_{K,L,n}(x)}{x} \, \dint \haus^{\dim L}(x)\leq 0$. This implies
 Theorem \ref{thm:conecentered} except for the range $ \dim L+1 \geq
 q>\dim L$.  For $q\ne n$, however, the slicing function is not log-concave and thus
 behaves quite differently.

 Theorem \ref{theo:maingeneralbodies}
   will follow immediately from Theorem \ref{thm:Gaussdiv} and the
 following bounds
 \begin{theo} Let $K \in \Knull $, $\gamma \in (0,1]$ such that $\gamma(- K) \subseteq
K$. Let $L \subset \mathbb{R}^n$ be a proper subspace and let
$q\in\R$ with  $q>\dim L+1$.  Then it holds
\begin{align*}
 \frac{1}{n  \dcura_{K,q}(\sph^{n-1})}\int_{K \vert L}&\ip{\nabla
   g_{K,L,q}(x)}{x} \, \dint \haus^{\dim L}(x) \\ \leq 
  &\begin{cases}
    &\frac{q-\dim(L)}{q}\frac{1-\gamma}{1+\gamma}, q\leq n, 
    \\[1ex]
    &\frac{(q-n)+\frac{1-\gamma}{1+\gamma}(n- \dim(L))}{q}, q>n+1.
   \end{cases} 
 \end{align*}
\label{thm:boundssection} 
\end{theo}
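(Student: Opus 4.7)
By Theorem~\ref{thm:Gaussdiv}, it is equivalent to bound $\rat(\dcura_{K,q},L) - \dim L/q$ from above. My strategy is to apply the divergence theorem on $K|L$ to rewrite the bulk integral as a boundary integral, and then to pair contributions from antipodal directions on $\sph^{n-1}\cap L$ so as to extract the asymmetry factor $(1-\gamma)/(1+\gamma)$.

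The first step is to apply the divergence theorem on $K|L$ to the vector field $F(x)=g_{K,L,q}(x)\,x$, for which $\operatorname{div}F=\langle\nabla g_{K,L,q}(x),x\rangle+(\dim L)\,g_{K,L,q}(x)$. Combined with the Fubini identity $\int_{K|L}g_{K,L,q}(x)\,d\haus^{\dim L}(x)=\int_K|z|^{q-n}\,d\haus^n(z)=(n/q)\,\dcura_{K,q}(\sph^{n-1})$, this gives
\begin{equation*}
\int_{K|L}\langle\nabla g_{K,L,q}(x),x\rangle\,d\haus^{\dim L}(x) = \int_{\bd(K|L)} g_{K,L,q}(x)\,\langle x,\nu_{K|L}(x)\rangle\,d\haus^{\dim L-1}(x) - (\dim L)\,\tfrac{n}{q}\,\dcura_{K,q}(\sph^{n-1}).
\end{equation*}
Switching to polar coordinates $x=\rho_{K|L}(\omega)\omega$ and using the cone-volume identity $\langle x,\nu_{K|L}(x)\rangle\,d\haus^{\dim L-1}(x)=\rho_{K|L}(\omega)^{\dim L}\,d\haus^{\dim L-1}(\omega)$, I reduce the problem to an upper bound on
\begin{equation*}
I_0 \;:=\; \int_{\sph^{n-1}\cap L}g_{K,L,q}\bigl(\rho_{K|L}(\omega)\omega\bigr)\,\rho_{K|L}(\omega)^{\dim L}\,d\haus^{\dim L-1}(\omega)
\end{equation*}
in terms of $\dcura_{K,q}(\sph^{n-1})$ and $\gamma$. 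For every such $\omega$, the section $K\cap(\rho_{K|L}(\omega)\omega+L^\perp)$ lies on the supporting hyperplane of $K$ with outer normal $\nu_{K|L}(\rho_{K|L}(\omega)\omega)\in L$, so $I_0$ is a weighted integral of $|z|^{q-n}$ over lateral faces of $K$.

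Next, I exploit the asymmetry $\gamma(-K)\subseteq K$, which projects to $\gamma\rho_{K|L}(\omega)\leq\rho_{K|L}(-\omega)\leq\rho_{K|L}(\omega)/\gamma$ and yields an analogous inclusion between the opposite lateral sections. Pairing the contributions at $\omega$ and $-\omega$ in $I_0$, and applying the elementary bound $|A-B|/(A+B)\leq (1-\gamma)/(1+\gamma)$ (valid whenever $\gamma A\leq B\leq A/\gamma$) to the relevant ratios of section integrals, produces the $(1-\gamma)/(1+\gamma)$ factor in the statement. The split between $q\leq n$ and $q>n+1$ reflects the behaviour of the integrand $|z|^{q-n}$: for $q\leq n$ it decreases with $|z|$, and the boundary values are controlled directly by $(n/q)\dcura_{K,q}(\sph^{n-1})$, giving the bound $\frac{q-\dim L}{q}\frac{1-\gamma}{1+\gamma}$; for $q>n+1$ it grows with $|z|$, contributing an extra $(q-n)/q$ term arising from comparing the weighted section integrals against $\int_{\sph^{n-1}}\rho_K^q\,d\haus^{n-1}$.

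The main obstacle will be the pairing step: the set $\alpha_K^\ast(L\cap\sph^{n-1})$ is in general not closed under $u\mapsto -u$ when $K$ is asymmetric, ruling out a direct pairing on the dual-curvature side, and this is precisely why the detour through Theorem~\ref{thm:Gaussdiv} and the divergence reduction above is essential. A secondary technical obstacle is the restriction $q>\dim L+1$ (rather than the minimal $q>\dim L$), which is forced by the integrability and regularity of $\nabla g_{K,L,q}$ near $\bd(K|L)$ required to invoke the generalized divergence theorem of~\cite{pfeffer2012divergence}.
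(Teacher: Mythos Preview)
Your first step---applying the divergence theorem to $F(x)=g_{K,L,q}(x)x$ on $K|L$---is exactly the computation that establishes Theorem~\ref{thm:Gaussdiv}; it converts the gradient integral back into $n\,\dcura_{K,q}(\sph^{n-1}\cap L)$. So after that step you are no longer proving Theorem~\ref{thm:boundssection} but re-attacking the subspace concentration bound directly, and the benefit of passing through $g_{K,L,q}$ has been undone.

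The substantive gap is in the pairing step. You propose to pair the boundary contributions at $\omega$ and $-\omega$ and invoke $|A-B|/(A+B)\leq(1-\gamma)/(1+\gamma)$, but you never identify quantities $A,B$ that actually satisfy $\gamma A\leq B\leq A/\gamma$: the asymmetry hypothesis gives $-\gamma K_{\rho(-\omega)(-\omega)}\subseteq K_{\gamma\rho(-\omega)\omega}$, a relation between a \emph{boundary} section on one side and an \emph{interior} section on the other, not between the two boundary sections you are pairing. More seriously, the quantity you must compare $I_0$ against is $\dcura_{K,q}(\sph^{n-1})=\tfrac{q}{n}\int_{K|L}g_{K,L,q}$, an integral over \emph{all} sections of $K$; your boundary-only pairing gives no handle on this interior mass, and the sketch ``boundary values are controlled directly by $(n/q)\dcura_{K,q}(\sph^{n-1})$'' is not an argument.

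The paper's proof supplies the missing mechanism: it never goes back to the boundary. Instead it bounds $\langle\nabla g_{K,L,q}(x),x\rangle$ \emph{pointwise} by $c_q\,g_{K,L,q}(x)$ for each interior $x$, using the inclusion
\[
\tfrac{\gamma+\lambda}{1+\gamma}\,K_x+\tfrac{(1-\lambda)\gamma}{1+\gamma}\,(-K_x)\subseteq K_{\lambda x}
\]
together with an Anderson-type inequality for integrals of $|z|^{q-n}$ over Minkowski combinations $\lambda_0 K_x+\lambda_1(-K_x)$ (quasiconcave when $q\leq n$, convex when $q\geq n+1$). Differentiating the resulting lower bound for $g_{K,L,q}(\lambda x)$ at $\lambda=1$ gives $\langle\nabla g_{K,L,q}(x),x\rangle\leq c_q\,g_{K,L,q}(x)$ with exactly the constants in the statement; integrating over $K|L$ and using $\int_{K|L}g_{K,L,q}=(n/q)\dcura_{K,q}(\sph^{n-1})$ finishes the proof. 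This pointwise-then-integrate structure is what your plan lacks.
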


For results on the dual Minkowski problem in the smooth setting we
refer to \cite{MR3818073, MR3659361, MR4055992} and the references within. 
The paper is organized as follows: Necessary notation and
preliminaries from Convex Geometry will be given in Section \ref{sec 2}. The
proof of Theorem \ref{thm:Gaussdiv} is presented in Section \ref{sec : 3} where we
actually prove a result for a slightly larger class of functions than
$g_{K,L,q}(x)$ (see Theorem \ref{thm: Gauss div gen}). Section \ref{sec : 4} is devoted to the proof of Theorem \ref{thm:boundssection}
and thus of Theorem \ref{theo:maingeneralbodies}.

\section{Preliminaries and Notation}\label{sec 2}
We begin with a few basic facts about convex bodies and functions for which we refer to
\cite{ gardner1995geometric, gruber2007convex, rockafellar1970convex, schneider_2013}. 
A function $f :\mathbb{R}^n \to \mathbb{R}_{\geq 0}$ is called quasiconcave (or unimodal) if $f((1-\lambda)x+\lambda y) \geq \min \{f(x),f(y)\}$ holds for all $x,y \in \R^n$.

As usual, a function $f:A\to\R^m$, $A\subseteq \R^n$, is called
\emph{Lipschitz continuous} or just
\emph{Lipschitz}  if there exists a constant $L\geq 0$ such that for all $x,y\in A$
\begin{equation*}
  \bnorm{f(x)-f(y)}\leq L\bnorm{x-y}.
\end{equation*}
 A function $f: A \to \mathbb{R}^m$ will be called  
 \emph{locally Lipschitz}  if for every $x\in A$ there
 exists an open neighbourhood $U\subseteq A$ such that $f_{|U}$ is
 Lipschitz.  By a standard compactness argument we have that 
 a locally Lipschitz function $f:A\to\R^m$   is Lipschitz
 on all compact subsets of $A$.


For $p\in\R$ we denote by  $\HomF{p}$ the class of all
functions $f : \mathbb{R}^n\setminus\{0\} \to \mathbb{R}_{\geq 0}$
which are \textit{positively homogeneous} of degree $p$, i.e., for all
$x\in\R^n\setminus\{0\}$ and $\alpha>0$ we have
\begin{equation*}
   f(\alpha x)=\alpha^p f(x).
\end{equation*}   
Observe for $p<0$ and $f\in \HomF{p}$ we must have $\lim_{x\to 0} f(x)=\infty$.
The next proposition states the fact that  a function $f \in \HomF{p}$
which is Lipschitz restricted to the sphere $\sph^{n-1}$ is locally
Lipschitz on $ \mathbb{R}^n \setminus \{0\}$. 
We will state this
fact in a rather explicit form for later purpose.

\begin{lemma} \label{lem: varphi loc Lipsch}
Let $p\in\R$, $f\in\HomF{p}$ be Lipschitz  on $\sph^{n-1}$. Then $f$
is locally Lipschitz  on $ \mathbb{R}^n \setminus
\{0\}$. More precisely, let $a\in  \mathbb{R}^n \setminus
\{0\}$. Then for all $x,y\in a+\frac{1}{2} \vert a \vert B_n$ we have
\begin{equation*}
                \vert f(x)-f(y)\vert  \leq c_{f}  \bnorm{a}^{p-1}\bnorm{x-y},
\end{equation*}  
where $c_{f}$ is a  constant depending only on $f$.
\end{lemma}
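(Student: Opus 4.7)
The plan is to reduce the Lipschitz estimate on the annular region $a+\tfrac12|a|B_n$ to the Lipschitz estimate on $\sph^{n-1}$ by exploiting homogeneity. Write $\hat x=x/|x|$ and use the identity $f(x)=|x|^p f(\hat x)$, which is valid on $\R^n\setminus\{0\}$ by $f\in\HomF{p}$. Then split
\begin{equation*}
f(x)-f(y) \;=\; \bigl(|x|^p-|y|^p\bigr)f(\hat y) \;+\; |x|^p\bigl(f(\hat x)-f(\hat y)\bigr),
\end{equation*}
which isolates a purely radial term and a purely spherical term, each of which can be controlled separately using standard estimates.

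For any $x,y\in a+\tfrac12|a|B_n$, the triangle inequality yields $\tfrac12|a|\le|x|,|y|\le\tfrac32|a|$, so $|x|,|y|$ are comparable to $|a|$. The radial factor is then controlled by the mean value theorem applied to $t\mapsto t^p$ on the interval $[\tfrac12|a|,\tfrac32|a|]$, giving $\bigl||x|^p-|y|^p\bigr|\le C_p|a|^{p-1}\,\bigl||x|-|y|\bigr|\le C_p|a|^{p-1}|x-y|$ for a constant $C_p$ depending only on $p$; here we also use that $f(\hat y)$ is bounded by $\max_{\sph^{n-1}}f$, which is finite because $f$ is Lipschitz and hence continuous on $\sph^{n-1}$. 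This handles the first term.

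For the spherical term, $|x|^p\le(\tfrac32|a|)^p\le C'_p|a|^p$, and the Lipschitz hypothesis on $\sph^{n-1}$ gives $|f(\hat x)-f(\hat y)|\le L\,|\hat x-\hat y|$ where $L$ is the Lipschitz constant of $f|_{\sph^{n-1}}$. The key elementary bound
\begin{equation*}
\bigl|\hat x-\hat y\bigr| \;=\; \left|\frac{x}{|x|}-\frac{y}{|y|}\right| \;\le\; \frac{|x-y|}{|x|} \;+\; \frac{\bigl||x|-|y|\bigr|}{|x|} \;\le\; \frac{2\,|x-y|}{|x|} \;\le\; \frac{4|x-y|}{|a|},
\end{equation*}
obtained by adding and subtracting $y/|x|$, then takes care of the remaining factor and produces the additional $|a|^{-1}$ needed to collapse $|a|^p$ to $|a|^{p-1}$.

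Combining the two contributions yields $|f(x)-f(y)|\le c_f|a|^{p-1}|x-y|$ with $c_f$ depending only on $p$, $\max_{\sph^{n-1}}f$ and $L$, all of which are data of $f$ alone. I do not expect any real obstacle: the entire argument is a two-line decomposition plus textbook estimates, and the choice of the neighbourhood $a+\tfrac12|a|B_n$ is tailored precisely so that $|x|\asymp|a|$ and the radial quantities behave well. The only mild subtlety is handling $p<1$ in the mean value estimate on $t^p$, but since $|x|,|y|$ are bounded away from $0$ on the neighbourhood this causes no trouble.
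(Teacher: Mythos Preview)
Your proof is correct and follows essentially the same approach as the paper: write $f(x)=|x|^p f(\hat x)$, split into a radial piece and a spherical piece, bound the radial piece via the mean value theorem for $t\mapsto t^p$, and bound the spherical piece using the Lipschitz constant on $\sph^{n-1}$ together with an elementary estimate for $|\hat x-\hat y|$ in terms of $|x-y|$. The only cosmetic differences are the grouping of the two terms (the paper factors out $|y|^p$ rather than $|x|^p$) and the precise inequality used to control $|\hat x-\hat y|$: the paper obtains $|\hat x-\hat y|\le |x-y|/|y|$ for $|x|\ge|y|$ via a minimization argument, whereas you get the slightly weaker $|\hat x-\hat y|\le 2|x-y|/|x|$ by adding and subtracting $y/|x|$. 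One small slip: your intermediate bound $|x|^p\le(\tfrac32|a|)^p$ is false for $p<0$, though your stated conclusion $|x|^p\le C'_p|a|^p$ is of course still correct once the endpoint is chosen according to the sign of~$p$.
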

\begin{proof} 
  First we observe that the 
 power functions of the norm are locally Lipschitz on
 $\mathbb{R}^n \setminus \{0\}$. To this end we note that 
 \begin{equation*}
         \max_{z\in a+\frac{1}{2} \vert a \vert B_n}
     \bnorm{z}^{p-1} = c_p  \bnorm{a}^{p-1}
 \end{equation*}
where $c_p$ is a constant depending only on $p$.  For $p\ne 0$ we get
by the  mean value theorem for $x,y\in a+\frac{1}{2} \vert a \vert B_n$
\begin{equation}
  \begin{split} 
   |\bnorm{x}^p-\bnorm{y}^p| & \leq |p|\left(\max_{z\in a+\frac{1}{2} \vert a \vert B_n}
     \bnorm{z}^{p-1}\right)\bnorm{x-y}\\ 
   &=\bar c_p \bnorm{a}^{p-1} \bnorm{x-y},
 \end{split}
 \label{eq:lip_power_norm}
 \end{equation}
for a constant $\bar c_p$ depending only on $p$. For $p=0$ we set
$\bar c_p=0$ and the inequality is certainly still true.


Now let  $\overline{z}=z/\bnorm{z}\in\sph^{n-1}$ for $z
\in\R^n\setminus\{0\}$ and,  moreover, let
$\alpha=\max\{f(z):z\in\sph^{n-1}\}$. By assumption there exists a
constant $L$ such that $|f(\overline x)-f(\overline y)|\leq L
\bnorm{\overline x-\overline y}$ for all
$x,y\in\R^n\setminus\{0\}$. As the convex function
$\bnorm{t\overline x-\overline y}^2$, $t\in\R$, is minimal
at $t=\langle \overline x, \overline y\rangle\leq 1$ we conclude 
for $\bnorm{x}\geq\bnorm{y}$ that 
\begin{equation}
     |f(\overline x)-f(\overline y)|\leq L
\bnorm{\overline x-\overline y}\leq L
\bnorm{\frac{\bnorm{x}}{\bnorm{y}}\overline{x}-\overline{y}}=L\frac{1}{\bnorm{y}}\bnorm{x-y}.
\label{eq:lipschitz_sphere}
\end{equation}   
 
Then for  $x,y\in a+\frac{1}{2} \vert a \vert B_n$, $\bnorm{x}\geq\bnorm{y}$,  we may
write in view of \eqref{eq:lip_power_norm} and \eqref{eq:lipschitz_sphere}
\begin{align*}
    \vert f(x) -f(y) \vert &= \left\vert
                             \bnorm{x}^{p}f(\overline{x}) -
                              \bnorm{y}^{p}f(\overline{y})
                             \right\vert \\
    & \leq\bnorm{y}^{p} \left\vert f(\overline{x})
      -f(\overline{y}) \right\vert + \left\vert
      f(\overline{x}) \right\vert \vert \bnorm{x}^{p} -
      \bnorm{y}^{p} \vert \\
&\leq \bnorm{y}^{p-1}L  \bnorm{x-y} +f(\overline{x}) \bar c_p
                                                           \bnorm{a}^{p-1}
                                                           \bnorm{x-y}\\
 &\leq \left(c_p\,L+\alpha\bar c_p\right) \bnorm{a}^{p-1}
      \bnorm{x-y}.
\end{align*}
With $c_f=c_p\,L+\alpha\bar c_p$ the assertion follows.
\end{proof}

For a given convex body $K \in \Kn$ the \emph{support function} $h_K:
\mathbb{R}^n \to \mathbb{R}$
is defined by
\begin{equation*} 
h_K(u)=\max_{x \in K} \ip{u}{x}.
\end{equation*}
The support function is convex, continuous and, in particular, $h_K\in
\HomF{1}$. The hyperplane
\begin{equation*}
  H_K(u)=\{x \in \mathbb{R}^n :   \ip{u}{x}=h_K(u)\}
\end{equation*} 
is a supporting hyperplane of $K$ and for a boundary
point $v \in \partial K\cap H_K(u)$, the vector $u$ will be called an
\emph{outer normal vector}. If in addition $u\in \sph^{n-1}$ then is an
\emph{outer unit normal vector}. Let $\partial
^{\ast} K\subseteq \partial K$ be the set of all boundary points
having an unique outer unit normal vector.
We remark that  the set of boundary points not having an unique outer
normal vector has measure zero, that is $\mathcal{H}^{n-1}(\partial K
\setminus \partial ^{\ast} K )=0$.

The \emph{spherical image map}
$\nu_K: \partial^{\ast} K \to \mathbb{S}^{n-1}$ maps a point $x$ to
its unique outer unit normal vector.

A kind of dual counterpart to the support function is the \emph{radial
function} $\rho_K:
\mathbb{R}^n\setminus \{0\}\to \mathbb{R}$ for $K \in \Knull$. It is
given by
\begin{equation}
  \label{eq : radial function}
  \rho_{K}(u)=\max \{\rho>0    : \rho u \in K\}.
\end{equation}   
The radial function $\rho_K$ is positive, continuous, Lipschitz on
$\sph^{n-1}$ with respect to the Euclidean metric, quasiconcave and
$\rho_k\in \HomF{-1}$.

Let $\Omega\subset \mathbb{S}^{n-1}$ be the set of all unit vectors such that for
$u\in\Omega$ the boundary
point $\rho_K(u)\,u$ has an unique outer normal vector. The map
$\alpha_K: \Omega \to \sph^{n-1}$ with  $\alpha_K(u)=\nu_K(\rho_K(u)u)$ 
is called the \emph{radial Gauss map}.  
For $\eta \subseteq \sph^{n-1}$, the \textit{reverse radial Gauss
  image} of $\eta$ is defined by \[\alpha_K^{\ast}(\eta)= \{u \in
  \mathbb{S}^{n-1} \, \vert \, \rho_K(u)u \in H_K(v) \text{ for some }
  v \in \eta \}.\] The reverse radial Gauss image of $\eta$ consists
of all $u \in \mathbb{S}^{n-1}$ such that the boundary point
$\rho_K(u)u$ has an outer unit normal vector in $\eta$ (see, e.g., \cite{huang2016geometric}).  

The maximal Euclidean distance between two points of $K$, i.e., the diameter of $K$, is denoted by $D(K)$, and for $A,B\subset\R^n$
\begin{align*}
    d(A,B)=\inf \{\delta>0 \, \vert \, A \subseteq B + \delta B_{n} \text{ and } B \subseteq A + \delta B_{n}\}
\end{align*}
denotes the Hausdorff distance between $A$ and $B$.

\section{Proof of Theorem \ref{thm:Gaussdiv}}\label{sec : 3}
As mentioned before, our proof strategy for Theorem \ref{thm:Gaussdiv}
allows for a slightly extended definition of the dual curvature measure depending on a function $\varphi$ satisfying homogeneity and Lipschitz continuity. Inspired by previous generalizations as for example in \cite{lutwak2018lp} we present the results of this section in this generalized form.
\begin{definition}\label{defi : extended dual curvature measure}
  Let $K \in \Knull$, $q\in \mathbb{R}$, and let $\varphi \in
  \HomF{q-n}$ be Lipschitz continuous on $\sph^{n-1}$. 
  For a Borel set $\eta \subseteq \sph^{n-1}$ let 
\begin{align*}
    \tilde{C}_{K,\varphi,q}(\eta)= \frac{1}{n}\int_{\alpha_K^{\ast}(\eta)} \varphi (u) \rho_K(u)^q \, du.
\end{align*}
\end{definition}
 Obviously, for $\varphi=|\cdot|^{q-n}=\rho_{B_n}^{n-q}$ we get the dual curvature measure. Without assuming homogeneity and Lipschitz continuity we draw the connection to existing definitions: For $\varphi=\rho_Q^{n-q}$ the $q$-th dual curvature measure with star body $Q$ introduced in \cite{lutwak2018lp} by Lutwak, Yang and Zhang is recovered. For $\varphi=(h_K \circ \alpha_K)^{-p}\rho_Q^{n-q}$ the $L_p$ dual curvature measure also introduced in \cite{lutwak2018lp} is retrieved. 
 For sake of completeness we mention that there exists an even more
 general definition namely the general dual Orlicz curvature measure
 introduced and  examined in \cite{gardner2019general,gardner2020general}. However, our definition is tailored to the new approach presented.

In analogy to \cite[Lemma 2.1]{boroczky2018subspace} we can express $\tilde{C}_{K,\varphi,q}(\eta)$ for $q>0$ as an integral of the function $\varphi(x)$.
\begin{lemma} \label{lem : generalized dcm representation}
Let $K \in \Knull$,  $q > 0$ and $\varphi \in
  \HomF{q-n}$. Then 
\begin{equation} 
  \dcura_{K,\varphi,q}( \eta) = \frac{q}{n} \int_{\left\{x \in K : x/\vert x \vert \in \alpha^{\ast}_K(\eta)\right\}} \varphi (x) \, d \mathcal{H}^{n}(x).
\label{eq:integral_extended}   
\end{equation}
\end{lemma}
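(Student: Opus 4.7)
The plan is to apply spherical coordinates to the integral on the right-hand side of \eqref{eq:integral_extended} and exploit the positive homogeneity of $\varphi$. The identity is essentially a polar-coordinate computation; the only delicate point is the integrability of $\varphi(x)$ near the origin, which is precisely what the assumption $q>0$ controls.

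First I would fix a Borel set $\eta\subseteq\sph^{n-1}$ and describe the set of integration
\[
E := \bigl\{ x\in K : x/\bnorm{x} \in \alpha_K^{\ast}(\eta)\bigr\}.
\]
By the definition \eqref{eq : radial function} of the radial function, $E\setminus\{0\}$ admits the parametrisation $\{r u : u\in\alpha_K^{\ast}(\eta),\ 0<r\leq \rho_K(u)\}$. Since $\alpha_K^{\ast}(\eta)$ is a Borel subset of $\sph^{n-1}$ and $\rho_K$ is continuous on $\sph^{n-1}$, the set $E$ is $\haus^n$-measurable.

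Next I would pass to polar coordinates $x=r u$ with $r>0$ and $u\in\sph^{n-1}$, so that $\dint\haus^n(x)=r^{n-1}\,\dint r\,\dint\haus^{n-1}(u)$. Combining the positive homogeneity $\varphi(ru)=r^{q-n}\varphi(u)$ with Fubini's theorem yields
\[
\int_E \varphi(x)\,\dint\haus^n(x) = \int_{\alpha_K^{\ast}(\eta)} \varphi(u) \int_0^{\rho_K(u)} r^{q-1}\,\dint r\,\dint\haus^{n-1}(u).
\]
For $q>0$ the inner integral equals $\rho_K(u)^q/q$, and multiplying both sides by $q/n$ reproduces the defining integral of $\dcura_{K,\varphi,q}(\eta)$ from Definition \ref{defi : extended dual curvature measure}.

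The only step that requires a brief justification is Fubini's theorem. Lipschitz continuity (and nonnegativity) of $\varphi$ on the compact sphere gives a bound $\varphi(u)\leq M$ for all $u\in\sph^{n-1}$, while $\rho_K(u)\leq D(K)$. Hence the radial integral is bounded uniformly in $u$ by $D(K)^q/q$, the iterated integral is finite, and Fubini applies. The hypothesis $q>0$ is exactly what is needed to absorb the potential singularity of $\varphi$ at the origin, and no further analytic obstacle is encountered.
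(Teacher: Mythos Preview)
Your proof is correct and follows essentially the same route as the paper: pass to spherical coordinates, use the homogeneity $\varphi(ru)=r^{q-n}\varphi(u)$, and integrate the radial variable. One small remark: the lemma as stated does not assume Lipschitz continuity of $\varphi$, only $\varphi\in\HomF{q-n}$, so your boundedness argument is not quite licensed---but this is harmless, since $\varphi\geq 0$ by the definition of $\HomF{q-n}$ and Tonelli's theorem applies directly without any finiteness check.
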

\begin{proof} As in \cite{boroczky2018subspace} one obtains by using spherical coordinates, Definition \ref{defi : extended dual curvature measure} and the homogeneity of $\varphi$:
\begin{align*}
     \frac{q}{n} \int_{\left\{x \in K : x/\vert x \vert \in \alpha^{\ast}_K(\eta)\right\}} \varphi (x) \, d \mathcal{H}^{n}(x) &= \frac{q}{n} \int_{ \alpha^{\ast}_K(\eta)} \int_0^{\rho_K(u)} r^{n-1} \varphi (ru) \, dr\, d u \\ &= \frac{q}{n} \int_{ \alpha^{\ast}_K(\eta)}\varphi(u)\int_0^{\rho_K(u)} r^{n-1+q-n}  \, dr\, d u \\
     &= \frac{1}{n} \int_{ \alpha^{\ast}_K(\eta)}\varphi(u)\rho_K(u)^{q}  \, du \\
     &= \tilde{C}_{K,\varphi,q}(\eta). \qedhere
\end{align*}
\end{proof}   

Since we are going to  evaluate the integral in
\eqref{eq:integral_extended} along slices of $K$ with affine planes we
set for $x\in\R^n$, $q>0$ and $\varphi \in
  \HomF{q-n}$  
\begin{equation*} 
    g_{K,L,\varphi,q}(x) = \int_{K \cap (x+L^{\perp})} \varphi(z) \, d \mathcal{H}^{n-\dim L}(z), 
\end{equation*}   
where $L$ is a proper subspace of $\R^n$ with orthogonal complement
$L^\perp$. In order for this integral to exist we have to assume $q>\dim
L$. Observe that
\begin{align}\label{eq: connection slicing function dcm}
    \tilde{C}_{K,\varphi,q}(\sph^{n-1})=\int_{K \vert L}g_{K,L,\varphi,q}(x) \, d \mathcal{H}^{\dim L}(x).
\end{align}In the next two lemmas we collect some basic properties of the
function which enable us to apply a divergence theorem later on. 
\begin{lemma}\label{lem: g contninuous interior}
Let $K \in \Knull$, $L \subset \mathbb{R}^n$ be a proper subspace and $q
> \dim L$.  Let $\varphi \in \HomF{q-n}$ be Lipschitz continuous on
$\sph^{n-1}$. 
Then 
\begin{enumerate}
    \item $g_{K,L,\varphi,q}$ is bounded on $K \vert L$.
    \item $g_{K,L,\varphi,q}$ is upper semicontinuous in $K \vert L
     $. 
    \item For $x \in K \vert L$ it holds
      \begin{align*}
        \lim_{m \to \infty}g_{K,L,\varphi,q}(e^{-\frac{1}{m}}x)=g_{K,L,\varphi,q}(x).
    \end{align*}
\end{enumerate}
\end{lemma}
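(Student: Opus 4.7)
My plan is to prove all three parts by controlling the integrand through the pointwise envelope $\varphi(z) \leq \alpha\, \bnorm{z}^{q-n}$, where $\alpha := \max_{\sph^{n-1}} \varphi$ is finite since $\varphi$ is Lipschitz on the compact sphere. Let $R := \max_{z \in K} \bnorm{z}$ and write each slice as $E_x := \{y \in L^\perp : x + y \in K\}$, so that $g_{K,L,\varphi,q}(x) = \int_{E_x} \varphi(x+y) \, \dint \haus^{n-\dim L}(y)$. The identity I will use throughout is that $x \in K \vert L \subset L$ forces $\bnorm{x+y}^2 = \bnorm{x}^2 + \bnorm{y}^2 \geq \bnorm{y}^2$ whenever $y \in L^\perp$.

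For (i), I split into two cases. If $q \geq n$, then $\varphi(z) \leq \alpha R^{q-n}$ on $K$ and $E_x$ has $\haus^{n-\dim L}$-measure at most that of the ball of radius $R$ in $L^\perp$, so boundedness is immediate. If $q < n$, the key identity together with $q-n<0$ yields $\varphi(x+y) \leq \alpha\, \bnorm{y}^{q-n}$; passing to polar coordinates in the $(n - \dim L)$-dimensional space $L^\perp$, the integral is dominated by a constant multiple of $\int_0^R r^{q-\dim L -1}\, \dint r$, which is finite precisely because $q > \dim L$. In both cases the bound does not depend on $x$.

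For (ii), I would apply the reverse Fatou lemma to $f_k(y) := \mathbf{1}_{E_{x_k}}(y)\,\varphi(x_k+y)$ along an arbitrary sequence $x_k \to x$ in $K \vert L$. Since $K^c$ is open, $y \notin E_x$ forces $y \notin E_{x_k}$ for all large $k$, hence $\limsup_k \mathbf{1}_{E_{x_k}}(y) \leq \mathbf{1}_{E_x}(y)$; combined with continuity of $\varphi$ on $\R^n\setminus\{0\}$ (the single exceptional point $y = -x$ has $\haus^{n-\dim L}$-measure zero in $L^\perp$) this gives $\limsup_k f_k(y) \leq \mathbf{1}_{E_x}(y)\,\varphi(x+y)$ almost everywhere. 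The envelope from (i) is independent of $k$, because $x_k \in L$ ensures $\bnorm{x_k+y}^2 \geq \bnorm{y}^2$ uniformly, so the very same integrable bound applies and reverse Fatou yields $\limsup_k g_{K,L,\varphi,q}(x_k) \leq g_{K,L,\varphi,q}(x)$.

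For (iii), I would exploit star-shapedness of $K$ with respect to $0 \in \inte K$: if $x+y \in K$ then $e^{-1/m}(x+y) \in K$, so $e^{-1/m} E_x \subseteq E_{e^{-1/m}x}$. Restricting the integral defining $g_{K,L,\varphi,q}(e^{-1/m}x)$ to this smaller set, changing variables $y = e^{-1/m}u$, and invoking the homogeneity of $\varphi$ of degree $q-n$ yields
\begin{equation*}
  g_{K,L,\varphi,q}(e^{-1/m}x) \;\geq\; e^{(\dim L - q)/m}\, g_{K,L,\varphi,q}(x),
\end{equation*}
so $\liminf_m g_{K,L,\varphi,q}(e^{-1/m}x) \geq g_{K,L,\varphi,q}(x)$. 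The matching upper bound then follows from part (ii) applied to $x_k = e^{-1/k}x \to x$. I expect the main delicate point to be the singular case $q < n$ in parts (i) and (ii): the argument depends critically on the orthogonality $x \perp y$ to keep the envelope $\bnorm{y}^{q-n}$ integrable near the origin, since the naive envelope $\bnorm{x_k+y}^{q-n}$ would have a singularity at a point depending on $x_k$ and no uniform integrable domination would be available.
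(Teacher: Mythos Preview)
Your proof is correct and, for parts (i) and (iii), follows essentially the same route as the paper: the envelope $\varphi(z)\leq \alpha\bnorm{z}^{q-n}$ combined with the orthogonality bound $\bnorm{x+y}\geq\bnorm{y}$ for (i), and the scaling inclusion $e^{-1/m}K_x\subseteq K_{e^{-1/m}x}$ paired with (ii) for (iii).

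For part (ii) your argument differs from the paper's. The paper passes to a subsequence along which the translated slices $C_m=K_{y_m}-y_m$ converge in Hausdorff distance (via the Blaschke selection theorem) to some $C\subseteq L^\perp$ with $x+C\subseteq K_x$, and then applies dominated convergence to conclude $\lim_m g(y_m)=\int_{x+C}\varphi\leq g(x)$. Your approach via reverse Fatou is more direct: you avoid Blaschke selection entirely, you do not need any geometric convergence of the slices, and the pointwise inequality $\limsup_k\mathbf{1}_{E_{x_k}}(y)\leq\mathbf{1}_{E_x}(y)$ follows immediately from closedness of $K$. The uniform integrable majorant you invoke is precisely the one constructed in (i), so the pieces fit together cleanly. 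Both arguments rely on the same orthogonality observation to handle the singular regime $q<n$; your route simply reaches upper semicontinuity with one fewer auxiliary result.
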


\begin{proof}
For $x\in K|L$ let  $K_x= K \cap (x+L^{\perp})$  and let $k=\dim
L$.

For i) let $R>0$ such that $K\subseteq R\,B_n$, and let
$\alpha\in\R_{>0}$ such that 
$\varphi(v)\leq \alpha$ for all $v\in\sph^{n-1}$. 
Applying spherical
coordinates with respect to an orthonormal basis in $L\cup L^\perp$ we
can write
\begin{equation*}
  \begin{split}
    g_{K,L,\varphi,q}(x)&=\int_{K_x}\varphi(z) \, d\mathcal{H}^{n-k}(z)
  \\ &\leq \int_{R\,B_n\cap L^\perp}\varphi(z+x) \,
  d\mathcal{H}^{n-k}(z)\\
  &= \int_{R\,B_n\cap L^\perp}\vert
  z+x\vert^{q-n}\varphi\left(\frac{z+x}{\vert z+x\vert}\right) \,
    d\mathcal{H}^{n-k}(z)\\
  &\leq  \alpha \int_{R\,B_n\cap L^\perp}\vert
  z+x\vert^{q-n} \,
    d\mathcal{H}^{n-k}(z).
  \end{split} 
\end{equation*}
For $q\geq n$ the integrand is bounded 
and so it is $ g_{K,L,\varphi,q}(x)$. So let $q<n$. As $x$ and $z$ are
contained in orthogonal subspaces we have $\vert z+x\vert \geq \vert z \vert$ and
so we may write
\begin{equation*}
  \begin{split}
    g_{K,L,\varphi,q}(x)&\leq \alpha \int_{R\,B_n\cap L^\perp}\vert
  z\vert^{q-n} \,
    d\mathcal{H}^{n-k}(z).
  \end{split}
\end{equation*}   
Since $q>k$, the integral is bounded.

In order to show ii), let $x\in K|L$ and $y_m\in K|L$, $m\in\N$,  with
$\lim_{m\to\infty} y_m=x$.   By the Blaschke selection theorem we can
assume that the sequence $C_m=K_{y_m}-y_m \subset L^{\perp}$ 
converges to a compact convex set $C \subset L^{\perp}$ with respect
to the Hausdorff distance. Thus $K_{y_m}$ converges to $x+C$ with
$x+C\subseteq K\cap (x+L^\perp)= K_x$.
Then we obtain by the Lebesgue's dominated convergence theorem  \begin{align*}
   \lim_{m \to \infty}g_{K,L,\varphi,q}(y_m)&=\lim_{m \to \infty}\int_{K_{y_m}} \varphi(z) \,d \mathcal{H}^{n-k}(z)\\
    &=\int_{x+C} \varphi(z)  \,d \mathcal{H}^{n-k}(z)\\
    &\leq \int_{K_x} \varphi(z)  \,d \mathcal{H}^{n-k}(z)\\&=g_{K,L,\varphi,q}(x).
\end{align*}
Finally, we come to iii).  As $0 \in \operatorname{int} K$ it holds
$e^{-\frac{1}{m}}K_x \subseteq K_{e^{-\frac{1}{m}}x}$ for
$m\in\N_{\geq 1}$. Thus
\begin{align*}
    g_{K,L,\varphi,q}(e^{-\frac{1}{m}}x)&=\int_{K_{e^{-\frac{1}{m}}x}} \varphi(z) \,d \mathcal{H}^{n-k}(z)\\
    &\geq \int_{e^{-\frac{1}{m}}K_{x}} \varphi(z) \,d \mathcal{H}^{n-k}(z)
    \\ &=e^{-\frac{q-k}{m}}\int_{K_x} \varphi(z) \,d \mathcal{H}^{n-k}(z)\\&=e^{-\frac{q-k}{m}}g_{K,L,\varphi,q}(x).
\end{align*}
Hence $g_{K,L,\varphi,q}(x)\leq \lim_{m \to
  \infty}g_{K,L,\varphi,q}(e^{-\frac{1}{m}}x)$ and 
combined with ii) the claim follows.
\end{proof}

Next we want to study Lipschitz continuity and differentiability properties of
$g_{K,L,\varphi,q}(x)$. To this end we need the following lemma.
\begin{lemma} \label{lem : technical} Let $K \in \Knull$,  $L\subset\R^n$ be a proper
  subspace, $q>\dim L$ and  let $\varphi \in \HomF{q-n}$ be Lipschitz
  continuous on $\sph^{n-1}$. For $x\in K|L$ let
  $K_x=K\cap(x+L^\perp)$ and let $U(x, \varepsilon)=x+(\varepsilon B_n\cap
  L)$ for an $\varepsilon>0$.
  \hfill
  \begin{enumerate}
  \item For $x\in\inte K|L$ there exists $\ov \varepsilon_x>0$ and a constant
    $\ov c_x$ depending on $x$ such that for all $x_1,x_2\in
    U(x, \ov \varepsilon_x)$ with $|x_1|\geq |x_2|$ 
       \begin{equation}\label{eqn:summand1}
\Big \vert \int_{K_{x_1}} \varphi(z) \, d \mathcal{H}^{n-k}(z) -
\int_{K_{x_2}+(x_1-x_2)} \varphi(z) \, d \mathcal{H}^{n-k}(z)\Big\vert \leq
{\ov c_x}|x_1-x_2|.
\end{equation} 
   \item  For $x\in\inte K|L\setminus\{0\}$ there exists $ \varepsilon_x>0$ and a constant
    $c_\varphi$ depending only on $\varphi$  such that for all $x_1,x_2\in
    U(x,\varepsilon_x)$
\begin{equation}\label{eqn:summand2}  
  \begin{split} 
     \Bigg \vert\int_{K_{x_2}} & \varphi(z) \, d \mathcal{H}^{n-k}(z) - \int_{K_{x_2}+x_1-x_2} \varphi(z) \, d \mathcal{H}^{n-k}(z)\Bigg \vert \\      
     &\leq \left(c_\varphi\,
      \int_{-x_2+K_{x_2}} |z+x|^{q-n-1}  \, d \mathcal{H}^{n-k}(z)\right) \vert x_1-x_2 \vert.
\end{split}
\end{equation}
\end{enumerate}
\label{lem:twosums}
\end{lemma}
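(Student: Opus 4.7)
The plan splits naturally along the two parts. Part (i) bounds the contribution from the geometric shape of the slice changing with $x'$, while part (ii) bounds the contribution from shifting the argument of $\varphi$ on a fixed shape. The former is handled by a Hausdorff-continuity argument for the slice map $x'\mapsto K_{x'}-x'$, the latter by a pointwise application of Lemma \ref{lem: varphi loc Lipsch} after a change of variables.

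For part (i), I would pick $\ov\varepsilon_x>0$ with $\overline{U(x,2\ov\varepsilon_x)}\subset \inte(K|L)$, which by convexity guarantees a uniform Euclidean inner ball in $K$ around every sliced point in the smaller neighbourhood. A standard convex-geometry argument (e.g.\ via the support function of $K_{x'}-x'\subset L^{\perp}$, which depends Lipschitz-continuously on $x'$ as long as $x'$ stays in the interior) then yields a Lipschitz constant $L_x$ such that $d(K_{x_1}-x_1,K_{x_2}-x_2)\leq L_x\,\vert x_1-x_2\vert$. Translating both by $x_1$ gives $d(K_{x_1},K_{x_2}+(x_1-x_2))\leq L_x\,\vert x_1-x_2\vert$, and a standard outer-parallel-body estimate converts this into
\begin{equation*}
\haus^{n-k}\bigl(K_{x_1}\triangle(K_{x_2}+(x_1-x_2))\bigr)\leq C_x\,\vert x_1-x_2\vert.
\end{equation*}
Finally, since $\varphi(z)=\vert z\vert^{q-n}\varphi(z/\vert z\vert)\leq \alpha\vert z\vert^{q-n}$ with $\alpha=\max_{\sph^{n-1}}\varphi$, and $q>\dim L$ makes this integrable on a fixed neighbourhood of the slice, one can bound the integral of $\varphi$ over the symmetric difference by $M_x\,\haus^{n-k}(\cdot)$ for a constant $M_x$ depending only on $x$, giving (i) with $\ov c_x=M_x\,C_x$. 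The restriction $\vert x_1\vert\geq \vert x_2\vert$ is presumably a convenience that orients the comparison so the outer-parallel estimate is cleaner, but it does not change the core argument.

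For part (ii), I would substitute $w=z-(x_1-x_2)$ in the second integral to rewrite the difference as
\begin{equation*}
\int_{K_{x_2}}\bigl(\varphi(w)-\varphi(w+(x_1-x_2))\bigr)\,\dint\haus^{n-k}(w).
\end{equation*}
Since $x\neq 0$, for $\varepsilon_x$ small enough one has $\vert w\vert\geq \tfrac{3}{4}\vert x\vert$ and $\vert x_1-x_2\vert\leq \tfrac{1}{2}\vert w\vert$ for all $w\in K_{x_2}$ and all $x_1,x_2\in U(x,\varepsilon_x)$, so both $w$ and $w+(x_1-x_2)$ lie in $w+\tfrac{1}{2}\vert w\vert B_n$. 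Lemma \ref{lem: varphi loc Lipsch} with $a=w$ then gives $\vert\varphi(w)-\varphi(w+(x_1-x_2))\vert\leq c_\varphi\,\vert w\vert^{q-n-1}\,\vert x_1-x_2\vert$ for a constant $c_\varphi$ depending only on $\varphi$. Integrating and changing variables back to $z=w-x_2\in L^{\perp}$ produces $\int_{-x_2+K_{x_2}}\vert z+x_2\vert^{q-n-1}\,\dint\haus^{n-k}(z)$; since $z\perp x$ (so $\vert z+x_2\vert^2=\vert z\vert^2+\vert x_2\vert^2$ and analogously for $x$) and $\vert x_2-x\vert$ is small, the ratio $\vert z+x_2\vert/\vert z+x\vert$ is uniformly bounded by a constant close to $1$, which can be absorbed into $c_\varphi$ to match the stated bound.

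The hardest part is the geometric step in (i), where one must pass from Hausdorff-Lipschitz continuity of $x'\mapsto K_{x'}-x'$ on compact interior subsets to a quantitative linear bound on the $\haus^{n-k}$-measure of the symmetric difference; particular care is needed when $x$ is close to $0$, where $\varphi$ becomes singular on the slice and the constant $\ov c_x$ must be allowed to blow up. Both ingredients are standard in convex geometry but require careful bookkeeping. Part (ii), by contrast, is essentially mechanical once the change of variables is in place, since everything reduces to Lemma \ref{lem: varphi loc Lipsch} and an elementary norm comparison.
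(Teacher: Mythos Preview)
Your overall plan matches the paper's: for (i) you pass to the integral of $\varphi$ over the symmetric difference of $K_{x_1}$ and $K_{x_2}+(x_1-x_2)$, then invoke local Lipschitz continuity of the section map in the Hausdorff metric (the paper cites a result of Klee) together with a conversion of Hausdorff distance into symmetric-difference volume (the paper cites a result of Groemer); for (ii) you translate and apply Lemma~\ref{lem: varphi loc Lipsch} pointwise. Part (ii) is correct. The only cosmetic difference is that the paper applies Lemma~\ref{lem: varphi loc Lipsch} with center $a=z+x$ for $z\in L^\perp$ (so that $z+x_1,z+x_2\in z+x+\tfrac{1}{2}|x|B_n\subset z+x+\tfrac{1}{2}|z+x|B_n$), which delivers the factor $|z+x|^{q-n-1}$ directly and spares you the final ratio comparison between $|z+x_2|$ and $|z+x|$.

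There is, however, a genuine gap in your part (i). The step ``$q>\dim L$ makes this integrable, hence the integral over the symmetric difference is $\leq M_x\cdot\haus^{n-k}(\,\cdot\,)$'' does not follow: a bound linear in the measure requires a \emph{pointwise} bound on the integrand over the symmetric difference, not just integrability, and for $q<n$ the function $\alpha|z|^{q-n}$ is unbounded near the origin. Your proposed remedy of letting $\ov c_x$ blow up as $x\to 0$ does not cover $x=0$ itself, which is included in the hypothesis of (i). This is exactly where the ordering $|x_1|\geq |x_2|$ does real work, contrary to your guess that it is ``presumably a convenience''. Writing $K_i=-x_i+K_{x_i}\subset L^\perp$ and choosing $R>0$ with $(1/R)B_n\subset K\subset R B_n$, the paper argues: if $z\in L^\perp$ satisfies $|z+x_1|<1/R$, then by orthogonality of $z$ and $x_i$ together with $|x_2|\leq |x_1|$ one also has $|z+x_2|\leq |z+x_1|<1/R$; hence both $z+x_i\in (1/R)B_n\subset K$, so $z\in K_1\cap K_2$. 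Thus every point of $K_1\triangle K_2$ satisfies $|z+x_1|\geq 1/R$, giving the uniform pointwise bound $\varphi(z+x_1)\leq \alpha\max\{(2R)^{q-n},R^{n-q}\}$ on the symmetric difference, independently of $x,x_1,x_2$. This is the missing ingredient that makes your $M_x$ finite even at $x=0$.
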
 
\begin{proof}  Let $k=\dim L$. Further, for abbreviation we write   $K_i=-x_i+K_{x_i}\subseteq L^\perp$ for
$i=1,2$. Let $R>0$ such that $(1/R)B_n
  \subseteq K \subseteq RB_n$, and let $\alpha\in\R_{\geq 0}$ such that $\varphi(v)\leq
\alpha$ for all $v \in\sph^{n-1}$.

For i) observe that  
\begin{align*}
  \Big \vert \int_{K_{x_1}} \varphi(z) \, d \mathcal{H}^{n-k}(z) &-
\int_{K_{x_2}+(x_1-x_2)} \varphi(z) \, d
  \mathcal{H}^{n-k}(z)\Big\vert\\  
     =\Big|\int_{K_1+x_1} &\varphi(z) \, d \mathcal{H}^{n-k}(z)-
                                 \int_{K_2+x_1} \varphi(z) \, d
                                 \mathcal{H}^{n-k}(z)\Big|\\
  & \leq \int_{K_1 \setminus K_2\cup K_2\setminus K_1} \varphi(z+x_1)
         \,  d \mathcal{H}^{n-k}(z)\\
  & \leq\alpha  \int_{K_1 \setminus K_2\cup K_2\setminus K_1}  \vert z+x_1 \vert^{q-n} \,  d \mathcal{H}^{n-k}(z).
\end{align*}
Now we claim that $\vert z+x_1 \vert^{q-n}\leq \max\{(2R)^{q-n},
R^{n-q}\}$ for $z\in K_1 \setminus K_2\cup K_2\setminus K_1$:

If $q\geq n$ this follows from $K\subseteq R\,B_n$.
So let $q<n$, and
suppose $\vert z+x_1 \vert^{q-n}\geq R^{n-q}$, i.e.,  $\vert z+x_1\vert
\leq 1/R$. Since $z$ and $x_1$ are contained in orthogonal subspaces and since
$|x_1|\geq |x_2|$ we also have $\vert z+x_2 \vert\leq 1/R$.  
As $(1/R)B_n\subseteq K$ this implies $z+x_i \in K\cap
(x_i+L^\perp)=K_{x_i}$, $i=1,2$, and we get the contradiction $z\in
K_1\cap K_2$.

We conclude 
\begin{align*}
   \Big \vert \int_{K_1+x_1} \varphi(z) \, d \mathcal{H}^{n-k}(z) &-
     \int_{K_2+x_1} \varphi(z) \, d \mathcal{H}^{n-k}(z)\Big\vert
  \\ \leq & \alpha\,\max \{(2R)^{q-n}, R^{n-q}\} \vol_{n-k}(K_1
            \setminus K_2\cup K_2 \setminus K_1).
\end{align*}
By a result of Groemer \cite[Theorem. i)]{Groemer2000} on comparing
different metrics on the space of convex bodies we have 
\begin{equation*}
       \vol_{n-k}(K_1
            \setminus K_2\cup K_2 \setminus K_1)\leq c(n,K)\,d(K_1,K_2).
          \end{equation*}
where  $c(n,K)$ is s a constant depending only on $n$ and $K$. 
On the other hand, according to \cite[Lemma 2.3]{klee1960polyhedral} the Hausdorff
distance of sections of convex
bodies is locally Lipschitz continuous, i.e., there exists
$\ov \varepsilon_x>0$ and a constant $c_x>0$ such that $d(K_{x_1},K_{x_2})\leq
c_x|x_1-x_2|$ for all $x_1,x_2\in U(x, \ov \varepsilon_x)$. As $d(K_1,K_2)\leq
d(K_{x_1},K_{x_2})$ we have shown

\begin{equation}
\Big \vert \int_{K_{x_1}} \varphi(z) \, d \mathcal{H}^{n-k}(z) -
\int_{K_{x_2}+(x_1-x_2)} \varphi(z) \, d \mathcal{H}^{n-k}(z)\Big\vert \leq
{\ov c_x}|x_1-x_2|
\label{eq:summand1}
\end{equation} 
for all $x_1,x_2\in U(x, \ov \varepsilon_x)$ and a suitable constant
${\ov c_x}$.

Now we come to ii) and here we assume $x\in\inte
K|L\setminus\{0\}$. First we note that 
\begin{equation*}
  \begin{split} 
     \Bigg \vert\int_{K_{x_2}} \varphi(z) \, d \mathcal{H}^{n-k}(z) &- \int_{K_{x_2}+(x_1-x_2)} \varphi(z) \, d \mathcal{H}^{n-k}(z)\Bigg \vert \\  = & \left \vert \int_{K_2} \varphi ( z +  x_2) \, d \mathcal{H}^{n-k}(z)- \int_{K_2} \varphi ( z+ x_1 ) \, d \mathcal{H}^{n-k}(z)\right \vert \\
      \leq & \int_{K_2} \vert\varphi ( z +  x_2)-\varphi ( z +
      x_1) \vert \, d \mathcal{H}^{n-k}(z).
  \end{split}     
\end{equation*}   
Let now $\varepsilon_x=\frac{1}{2}|x|$, and $x_1,x_2\in
U(x,\varepsilon_x)$. Then for $z\in L^\perp$ we have  $z+x_1,z+x_2
\in z+x +\frac{1}{2}|x|B_n\subseteq z+x
+\frac{1}{2}|z+x|B_n$. In view of  Lemma  \ref{lem: varphi loc Lipsch})
we get
\begin{equation*}
  \begin{split} 
  \int_{K_2} \vert\varphi ( z +  x_2)&-\varphi ( z +
      x_1) \vert \, d \mathcal{H}^{n-k}(z) \\ &\leq  c_\varphi\,|x_1-x_2|
      \int_{K_2} |z+x|^{q-n-1}  \, d \mathcal{H}^{n-k}(z),
   \end{split}    
\end{equation*}   
where $c_\varphi$ is a constant depending on $\varphi$.
\end{proof}

\begin{prop} \label{prop : g Lip, diff, grad bound} Let $K \in \Knull$,  $L\subset\R^n$ be a proper
  subspace, $q>\dim L$, and  let $\varphi \in \HomF{q-n}$ be Lipschitz
  on $\sph^{n-1}$.
  \begin{enumerate}
   \item $g_{K,L,\varphi,q}(x)$ is locally
  Lipschitz in $\operatorname{int}(K \vert
  L)\setminus\{0\}$.
\item  $g_{K,L,\varphi,q}(x)$ is almost
  everywhere differentiable in $\operatorname{int}(K \vert
  L)$.
\item Let $q>\dim L+1$. Then

  \begin{equation*}
    \int_{K \vert L} \vert \langle \nabla g_{K,L,\varphi,q}(x),x
    \rangle \vert \, d \mathcal{H}^{\dim L}(x) < \infty.
  \end{equation*}
  \end{enumerate}
\end{prop}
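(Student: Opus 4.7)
For (i), I would combine the two bounds of Lemma~\ref{lem : technical} via the triangle inequality: at any $x \in \inte(K|L) \setminus \{0\}$ there is a neighborhood $U$ of $x$ (still in $\inte(K|L)\setminus\{0\}$) on which \eqref{eqn:summand1} and \eqref{eqn:summand2} together yield
\begin{equation*}
|g_{K,L,\varphi,q}(x_1) - g_{K,L,\varphi,q}(x_2)| \leq \bigl(\ov c_x + c_\varphi\, I(x)\bigr)\, |x_1 - x_2|,\qquad x_1,x_2 \in U,
\end{equation*}
where $I(x) = \int_{-x+K_x}|z+x|^{q-n-1}\,d\haus^{n-\dim L}(z)$. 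Since $z\in L^\perp$ and $x\in L$ are orthogonal, $|z+x|^2 = |z|^2 + |x|^2 \geq |x|^2 > 0$, so the integrand is bounded on the bounded integration domain and $I(x) < \infty$; this proves local Lipschitz continuity. Then (ii) is immediate from Rademacher's theorem on $\inte(K|L) \setminus \{0\}$, together with the fact that $\{0\}$ is a $\haus^{\dim L}$-null set in the $\dim L\geq 1$--dimensional space $L$.

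For (iii), the naive attempt to bound $|\ip{\nabla g_{K,L,\varphi,q}(x)}{x}|$ by $|x|$ times the pointwise Lipschitz constant from (i) appears to fail: the constant $\ov c_x$, coming from Klee's bound on the Lipschitz behaviour of sections, need not be globally integrable over $K|L$ because it may blow up near $\partial(K|L)$. I would instead exploit the homogeneity of $\varphi$ through a scaling identity. A change of variable $w = e^s z$ in the slice integral, together with $\varphi(e^{-s}w) = e^{-(q-n)s}\varphi(w)$, gives
\begin{equation*}
g_{K,L,\varphi,q}(e^s x) = e^{(q-\dim L)s}\, g_{e^{-s}K,L,\varphi,q}(x).
\end{equation*}
Setting $F(s) := g_{e^{-s}K,L,\varphi,q}(x)$ and differentiating at $s=0$ at a point $x$ where $g_{K,L,\varphi,q}$ is differentiable yields
\begin{equation*}
F'(0) = \ip{\nabla g_{K,L,\varphi,q}(x)}{x} - (q-\dim L)\, g_{K,L,\varphi,q}(x).
\end{equation*}
Since $e^{-s}K \subseteq e^{-s'}K$ for $s\geq s' \geq 0$ and $\varphi\geq 0$, the function $F$ is monotone decreasing, so $F'(0)\leq 0$, and consequently
\begin{equation*}
|\ip{\nabla g_{K,L,\varphi,q}(x)}{x}| \leq (q-\dim L)\, g_{K,L,\varphi,q}(x) - F'(0).
\end{equation*}

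To finish (iii) I would integrate this inequality over $K|L$. The first term contributes $(q-\dim L)\int_K\varphi\, d\haus^n$ via \eqref{eq: connection slicing function dcm}, which is finite by Lemma~\ref{lem : generalized dcm representation}. For the second term, Fatou's lemma applied to the nonnegative difference quotients $\bigl[g_{K,L,\varphi,q}(x) - g_{e^{-s}K,L,\varphi,q}(x)\bigr]/s$ gives
\begin{equation*}
\int_{K|L} (-F'(0))\, d\haus^{\dim L}(x) \leq \liminf_{s\to 0^+}\frac{1}{s}\int_{K|L}\bigl[g_{K,L,\varphi,q}(x) - g_{e^{-s}K,L,\varphi,q}(x)\bigr]\, d\haus^{\dim L}(x).
\end{equation*}
By \eqref{eq: connection slicing function dcm} applied to both $K$ and $e^{-s}K$, together with the substitution $w = e^{s}z$ giving $\int_{e^{-s}K}\varphi\, d\haus^n = e^{-qs}\int_K\varphi\, d\haus^n$, the right-hand side equals $\lim_{s\to 0^+}\frac{1-e^{-qs}}{s}\int_K\varphi\, d\haus^n = q\int_K\varphi\, d\haus^n$, again finite. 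The hard part is precisely this transition: pointwise Lipschitz constants are not globally integrable, but the scaling identity turns the question into a clean monotonicity argument plus Fatou, requiring only explicit global integrals of $\varphi$ over $K$ and $e^{-s}K$.
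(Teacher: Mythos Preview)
Your argument for (i) and (ii) is essentially the paper's: split $g(x_1)-g(x_2)$ via the triangle inequality, apply the two bounds of Lemma~\ref{lem : technical}, note that the resulting integral is finite because $|z+x|\geq |x|>0$ on $L^\perp$, and invoke Rademacher. One small imprecision: the integral in \eqref{eqn:summand2} is over $-x_2+K_{x_2}$ rather than $-x+K_x$, so the local Lipschitz constant should really be $\ov c_x + c_\varphi\,\sup_{x_2\in U}\int_{-x_2+K_{x_2}}|z+x|^{q-n-1}\,d\haus^{n-k}(z)$; this supremum is still finite since all these domains sit in $RB_n\cap L^\perp$ and the integrand is bounded there.

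For (iii) you take a genuinely different route. The paper keeps working with the pointwise Lipschitz constants from (i): the hypothesis $q>\dim L+1$ is used exactly to make $\int_{RB_n\cap L^\perp}|z+x|^{q-n-1}\,d\haus^{n-k}(z)$ bounded uniformly in $x$, and a compactness argument controls the Klee constant $\ov c_x$ on each $e^{-1/m}K|L$, giving $|\ip{\nabla g}{x}|\leq C\,|x|$ there and hence finiteness of the integral. Your scaling identity $g_K(e^sx)=e^{(q-\dim L)s}g_{e^{-s}K}(x)$, combined with the monotonicity of $s\mapsto g_{e^{-s}K}(x)$ and Fatou, bypasses the Lipschitz constants altogether and reduces everything to the explicit quantity $\frac{1-e^{-qs}}{s}\int_K\varphi\to q\int_K\varphi$. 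This is cleaner, and notably it never actually uses the extra hypothesis $q>\dim L+1$ --- only the baseline $q>\dim L$ (needed for $g$ to be well-defined) enters --- so your argument in fact removes the restriction that the paper itself flags as ``likely to be not necessary''. What the paper's approach retains is a direct pointwise bound on the gradient, while yours trades pointwise control for a clean global identity.
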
   
\begin{proof} The second statement follows directly from i) via
  Rademacher's theorem (cf. Theorem 3.1.6 in
  \cite{federer2014geometric}). In order to verify i) we use Lemma
  \ref{lem : technical} and its notation. So let $x\in\inte K|L\setminus\{0\}$,
  $\delta_x=\min\{\varepsilon_x, \ov \varepsilon_x\}$ and let
  $x_1,x_2\in U(x,\delta_x)$ and assume $|x_1|\geq |x_2|$. Then
  \begin{equation}
  \begin{split}
   \lvert g_{K,L,\varphi,q}(x_1) &-g_{K,L,\varphi,q}(x_2) \rvert  \\ &= \left \vert \int_{K_{x_1}} \varphi(z) \, d \mathcal{H}^{n-k}(z)- \int_{K_{x_2}} \varphi(z) \, d \mathcal{H}^{n-k}(z)\right \vert \\
    &\leq  \left \vert \int_{K_{x_1}} \varphi(z) \, d
      \mathcal{H}^{n-k}(z)- \int_{K_{x_2}+(x_1-x_2)} \varphi(z) \, d
      \mathcal{H}^{n-k}(z)\right \vert \\ &\quad\quad\quad\quad +
    \left \vert\int_{K_{x_2}} \varphi(z) \, d \mathcal{H}^{n-k}(z)-
      \int_{K_{x_2}+(x_1-x_2)} \varphi(z) \, d
      \mathcal{H}^{n-k}(z)\right \vert \\
    &\leq |x_1-x_2|\left(\ov c_x +  c_\varphi
      \int_{-x_2+K_{x_2}} |z+x|^{q-n-1}  \, d
      \mathcal{H}^{n-k}(z)\right). 
  \end{split}
  \label{eq:summands}    
\end{equation}
Again assuming that $K\subseteq RB_n$ we may bound
\begin{equation}
  \begin{split}
   \lvert g_{K,L,\varphi,q}(x_1) & -g_{K,L,\varphi,q}(x_2) \rvert  \\
   & \leq |x_1-x_2|\left(\ov c_x +  c_\varphi
      \int_{RB_n\cap L^\perp} |z+x|^{q-n-1}  \, d\mathcal{H}^{n-k}(z)\right). 
 \end{split}
\end{equation}  
As $|z+x|\leq 2R$ the last integral is bounded by
$\widetilde{c}=(2R)^{q-k-1}\vol_{n-k}(B_n\cap L^\perp)$ if $q-n-1\geq 0$.  If $q-n-1<0$
we note that $|z+x|\geq |x|>0$ and so 
\begin{equation*}
  \begin{split} 
        \int_{RB_n\cap L^\perp} |z+x|^{q-n-1}  \, d \mathcal{H}^{n-k}(z)\leq
        \widetilde c_x= |x|^{q-n-1} R^{n-k} \vol_{n-k}(B_n\cap
        L^\perp).
\label{eq:secondsum}        
    \end{split}     
      \end{equation*}
Altogether we obtain, 
\begin{equation}
  \begin{split}
   \lvert g_{K,L,\varphi,q}(x_1)  -g_{K,L,\varphi,q}(x_2) \rvert  
   \leq |x_1-x_2|\left(\ov c_x +
     c_\varphi\max\{\widetilde{c},\widetilde{c}_x\}\right),
 \label{eq:locallip}  
 \end{split}
\end{equation} 
 which shows i).

In order to verify  iii) we will first argue that in the case $x\in\inte
e^{-\frac{1}{m}}K|L\setminus\{0\}$ and $q>\dim L +1$ we can make the 
constants in \eqref{eq:locallip} independent of $x$.

We start with $\widetilde{c}_x$ from   \eqref{eq:locallip} appearing
in the case  $q-n-1<0$. If $q>\dim
L+1$ and so $q-n-1>\dim L-n$ the integral $\int_{RB_n\cap L^\perp}
|z+x|^{q-n-1}  \, d \mathcal{H}^{n-k}(z)$ is bounded from above by a 
constant $\widetilde c'$ for any $x\in K|L$. Hence, \eqref{eq:locallip} becomes
\begin{equation}
  \begin{split}
   \lvert g_{K,L,\varphi,q}(x_1)  -g_{K,L,\varphi,q}(x_2) \rvert  
   \leq |x_1-x_2|\left(\ov c_x +
     c_\varphi\max\{\widetilde{c},\widetilde{c}'\}\right),
 \label{eq:locallipa}  
 \end{split}
\end{equation} 
for all $x_1,x_2\in U(x,\delta_x)$. 
By a standard compactness argument we can bound the constants $c_x$ for all $x \in e^{-\frac{1}{m}}K|L$ by a constant  $\overline{c}$ and so we get
\begin{equation}
  \begin{split}
   \lvert g_{K,L,\varphi,q}(x_1)  -g_{K,L,\varphi,q}(x_2) \rvert  
   \leq |x_1-x_2|\left(\ov c +
     c_\varphi\max\{\widetilde{c},\widetilde{c}'\}\right),
 \label{eq:locallipb}  
 \end{split}
\end{equation}
for all $x_1,x_2\in U(x,\delta_x)$, and $x\in
e^{-\frac{1}{m}}K|L\setminus\{0\}$.
Hence, for any $x \in e^{-\frac{1}{m}}K \vert L \setminus \{0\}$ where
$\nabla g_{K,L,\varphi,q} (x)$ exists, it holds
\begin{equation*}
  \begin{split}
    \vert \langle \nabla g_{K,L,\varphi,q}(x), x  \rangle \vert &=
    \lim_{\varepsilon \to 0} \frac{\vert
      g_{K,L,\varphi,q}(x+\varepsilon x) -g_{K,L,\varphi,q}(x) \vert
    }{|\varepsilon|} \\ & \leq \left(\ov c +
     c_\varphi\max\{\widetilde{c},\widetilde{c}'\}\right)|x|.
  \end{split}   
  \end{equation*} 
According to ii) the gradient exists almost everywhere in $K|L$ and so
we have
\begin{equation*}
  \begin{split} 
  \int_{K \vert L}&|\langle \nabla g_{K,L,\varphi,q}(x),x \rangle| \, d
  \mathcal{H}^{\dim L}(x) \\
& = \lim_{m \to \infty } \int_{e^{-\frac{1}{m}}K \vert L}|\langle
        \nabla g_{K,L,\varphi,q}(x),x \rangle | \, d \mathcal{H}^{\dim
          L}(x) < \infty.
     \end{split}    \qedhere
    \end{equation*}
\end{proof}

Regarding Propostion \ref{prop : g Lip, diff, grad bound} i) we remark that in general $g_{K,L,\varphi,q}$ is not locally Lipschitz in 0, as the following example shows:
 Let $k \in \{1, \dotsc, n-1\}$ and $\varphi( \cdot)= \vert \cdot \vert^{q-n}$, denote $K=B_k \times B_{n-k} \subset \R^n$ and
$L= \R^k$. Note that the sections of $K$ are the same up to translation. Therefore the summand in \eqref{eqn:summand1} is zero and only the term in \eqref{eqn:summand2} is relevant to decide local Lipschitz continuity in zero.
  Let $k+1>q >k$ and $x  \in K \vert L$ with $\vert x \vert <1 $. As before we obtain
  \begin{align*}
      g_{K,L,q}(x)=& 
       \int_{B_{n-k}}\vert x+z  \vert^{q-n}  \, d \mathcal{H}^{n-k}(z) \\
      =& \int_0^1 \int_{\sph^{n-k-1}}r^{n-k-1}\vert x+ru  \vert^{q-n} \, du \,dr \\
      =&(n-k) \text{vol}(B_{n-k}) \vert x \vert^{q-k} \int_0^{\frac{1}{\vert x \vert}} s^{n-k-1} \sqrt{ s ^2 +1}^{q-n} \, ds \\
      \leq & (n-k) \text{vol}(B_{n-k})  \vert x \vert^{q-k} \left( \left(\frac{1}{\sqrt{2}}\right)^{n-q} \int_0^1 s^{q-k-1} \, ds + \int_1^{\frac{1}{\vert x \vert}}s^{q-k-1} \, ds\right) \\
      \leq &(n-k) \text{vol}(B_{n-k})   \frac{1}{q-k}\left(\left(\frac{1}{\sqrt{2}}\right)^{n-q}\vert x \vert^{q-k} +1-\vert x \vert^{q-k} \right).
  \end{align*}
On the other hand it holds
\begin{align*}
       g_{K,L,q}(0)
      =& \int_{B_{n-k}} \vert z \vert^{q-n} \, d \mathcal{H}^{n-k}(z) \\
      =& \int_0^1 \int_{\sph^{n-k-1}}r^{q-k-1} \, du \,dr \\  
      =&(n-k) \text{vol}(B_{n-k})  \frac{1}{q-k}.
\end{align*}
This gives
\begin{align*}
    &\frac{ \vert g_{K,L,q}(x)- g_{K,L,q}(0) \vert}{\vert x \vert } \\ \geq & (n-k) \text{vol}(B_{n-k})  \frac{1}{q-k} \vert x \vert ^{q-k-1} \left(1- \left(\frac{1}{\sqrt{2}}\right)^{n-q}\right), 
\end{align*}
which goes to infinity as $\vert x \vert$ goes to zero.

The next lemma which gives a representation of $\dcura_{K,\varphi,q}( \eta)$
via the inverse Gauss map was proven for $\varphi=|\cdot|^{q-n}$ in
\cite{huang2016geometric} and our slightly more general case can be
done analogously.  

\begin{lemma} \label{lem: Cor 3.5 HLYZ}
Let $K \in \Knull$, $q>0$. Let $\varphi \in \HomF{q-n}$ be Lipschitz continuous on $\sph^{n-1}$. Let further $\eta \subseteq \sph^{n-1}$ be a Borel set. Then \begin{equation*} \label{eq: dualcurvat via nuFunkt}
\dcura_{K,\varphi,q}( \eta) = \frac{1}{n}
\int_{\nu_K^{-1}(\eta)} \langle\nu_K(x), x\rangle \varphi
(x)\, d \mathcal{H}^{n-1}(x).
\end{equation*}
\end{lemma}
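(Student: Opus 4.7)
The plan is to adapt the argument of \cite[Corollary 3.5]{huang2016geometric}, which handles the special case $\varphi=|\cdot|^{q-n}$, to the slightly more general setting of Definition \ref{defi : extended dual curvature measure}. The engine driving the proof is the change of variables between the spherical parametrization $u\mapsto r_K(u):=\rho_K(u)u$ of $\partial K$ and the surface measure on $\partial K$. Throughout, the boundary points lacking a unique outer normal form an $\haus^{n-1}$-null set (and their inverse images under $r_K$ an $\haus^{n-1}$-null subset of $\sph^{n-1}$), so this set may be discarded without loss.

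Starting from
\begin{equation*}
 \tilde C_{K,\varphi,q}(\eta)=\frac{1}{n}\int_{\alpha_K^{\ast}(\eta)}\varphi(u)\rho_K(u)^{q}\,du,
\end{equation*}
I would apply the classical Jacobian identity
\begin{equation*}
 d\haus^{n-1}(x) \;=\; \frac{\rho_K(u)^{n}}{\ip{\nu_K(x)}{x}}\,du,\qquad x=\rho_K(u)u\in\partial^{\ast}K,
\end{equation*}
which follows from comparing the two standard expressions for the volume of the cone from the origin to an infinitesimal surface patch: the spherical one, $\rho_K(u)^n/n\,du$, and the classical one, $\tfrac{1}{n}\ip{\nu_K(x)}{x}\,d\haus^{n-1}(x)$. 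Substituting yields
\begin{equation*}
 \varphi(u)\rho_K(u)^{q}\,du \;=\; \varphi(u)\rho_K(u)^{q-n}\ip{\nu_K(x)}{x}\,d\haus^{n-1}(x),
\end{equation*}
and since $\varphi\in\HomF{q-n}$ is positively homogeneous of degree $q-n$, one has $\varphi(u)\rho_K(u)^{q-n}=\varphi(\rho_K(u)u)=\varphi(x)$. Hence the integrand becomes $\varphi(x)\ip{\nu_K(x)}{x}$, exactly as required.

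It remains to identify the domains of integration. By the definition of the reverse radial Gauss image, $u\in\alpha_K^{\ast}(\eta)$ iff $\rho_K(u)u$ lies in some supporting hyperplane $H_K(v)$ with $v\in\eta$; for $u$ in the full-measure set where $\alpha_K$ is single-valued, this is equivalent to $\nu_K(r_K(u))\in\eta$, i.e.\ $r_K(u)\in\nu_K^{-1}(\eta)$. Thus the change of variables sends $\alpha_K^{\ast}(\eta)$ (up to a null set) to $\nu_K^{-1}(\eta)\subseteq\partial^{\ast}K$, and we obtain the claimed representation. The only points requiring attention are the measure-theoretic bookkeeping of the null sets and the fact that $\varphi$, being Lipschitz on $\sph^{n-1}$, is bounded there, hence bounded on the compact set $\partial K\subset\R^n\setminus\{0\}$ by positive homogeneity; this, together with $q>0$ as in Lemma \ref{lem : generalized dcm representation}, ensures all integrals are finite. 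I do not anticipate a genuine obstacle here, as the argument is essentially the one from \cite{huang2016geometric} with $|\cdot|^{q-n}$ replaced by $\varphi$.
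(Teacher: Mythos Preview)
Your proposal is correct and is precisely the adaptation the paper has in mind: the paper's own proof consists of the single line ``See \cite[Lemma 3.5]{huang2016geometric}'', after remarking just above the statement that the slightly more general case can be done analogously. Your write-up carries out exactly this analogy---the radial change of variables $x=\rho_K(u)u$ with Jacobian $\rho_K(u)^n/\ip{\nu_K(x)}{x}$, the homogeneity $\varphi(u)\rho_K(u)^{q-n}=\varphi(x)$, and the identification of $\alpha_K^{\ast}(\eta)$ with $r_K^{-1}(\nu_K^{-1}(\eta))$ up to null sets---so there is nothing to add.
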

\begin{proof} See \cite[Lemma 3.5]{huang2016geometric}.
\end{proof}   
We aim to express the dual curvature measure on a subspace as a multiple of the dual curvature measure on the whole sphere plus a term depending on the directional derivative of $g_{K,L,\varphi, q}$. This can be established via a divergence theorem following the approach presented in \cite{boroczky2014conevolume,henk2014cone}. The following divergence theorem presented in \cite{pfeffer2012divergence} will be employed.
\begin{theo}\label{theo: GG div}
Let $A \in \Kn$. Let $G: A \to \mathbb{R}^n$ be a bounded vector field, which is locally Lipschitz continuous in $A \setminus \{a\}$ for some $a \in A$. Furthermore suppose that $\operatorname{div}G \in L_1(A)$. Then it holds
\begin{align*}
    \int_{A} \operatorname{div} G(x) \, d \mathcal{H}^{n}(x) = \int_{\partial^{\ast} A} \langle G(x), \nu_A(x) \rangle \, d \mathcal{H}^{n}(x).
\end{align*}
\end{theo}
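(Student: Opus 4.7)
The strategy is to excise a small Euclidean ball around the singular point $a$, apply the classical divergence theorem on the remaining set (where $G$ is now Lipschitz), and then pass to the limit as the radius of the removed ball tends to zero. Concretely, for $\varepsilon>0$ I would set $A_\varepsilon = A \setminus \inte B(a,\varepsilon)$, where $B(a,\varepsilon)$ is the Euclidean ball of radius $\varepsilon$ centered at $a$. For all sufficiently small $\varepsilon$, the set $A_\varepsilon$ is compact with Lipschitz boundary, and (up to an $\mathcal{H}^{n-1}$-null set of corners where $\partial A$ meets $\partial B(a,\varepsilon)$) this boundary decomposes into the regular part $\partial^\ast A \setminus B(a,\varepsilon)$ with outer normal $\nu_A$ and the inner part $A \cap \partial B(a,\varepsilon)$, where the outer normal of $A_\varepsilon$ coincides with the inward normal $-\nu_{B(a,\varepsilon)}$ of the excised ball.

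On the compact set $A_\varepsilon$ the vector field $G$ is globally Lipschitz, by a standard compactness argument applied to the local Lipschitz hypothesis on $A \setminus \{a\}$. Hence the classical divergence theorem for Lipschitz vector fields on Lipschitz domains yields
\begin{equation*}
\int_{A_\varepsilon} \operatorname{div} G \, d \mathcal{H}^{n} = \int_{\partial^\ast A \setminus B(a,\varepsilon)} \langle G, \nu_A \rangle \, d \mathcal{H}^{n-1} - \int_{A \cap \partial B(a,\varepsilon)} \langle G, \nu_{B(a,\varepsilon)} \rangle \, d \mathcal{H}^{n-1}.
\end{equation*}
Letting $\varepsilon \to 0^{+}$, the hypothesis $\operatorname{div} G \in L_1(A)$ combined with dominated convergence forces $\int_{A_\varepsilon} \operatorname{div} G \to \int_A \operatorname{div} G$; dominated convergence with dominating constant $\sup_A |G|$ handles the first boundary integral, making it converge to $\int_{\partial^\ast A} \langle G, \nu_A\rangle \, d\mathcal{H}^{n-1}$; and the boundedness of $G$ gives the crucial estimate
\begin{equation*}
\Big| \int_{A \cap \partial B(a,\varepsilon)} \langle G, \nu_{B(a,\varepsilon)}\rangle \, d\mathcal{H}^{n-1} \Big| \leq \sup_A |G| \cdot n \vol(B_n) \, \varepsilon^{n-1} \longrightarrow 0,
\end{equation*}
which absorbs the singular boundary contribution. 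Combining the three limits yields the claimed identity.

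The main obstacle I would expect is technical rather than conceptual: verifying the classical divergence theorem on the non-convex domain $A_\varepsilon$ and correctly identifying its reduced boundary, especially when $a \in \partial A$, so that $\partial A$ and $\partial B(a,\varepsilon)$ genuinely meet along a Lipschitz $(n-2)$-dimensional set. This can be circumvented by approximating $A_\varepsilon$ from inside by smooth domains, applying the smooth divergence theorem there, and then passing to the limit using the Lipschitz regularity of $G$ on $A_\varepsilon$ and the $L_1$-control on $\operatorname{div} G$ uniformly. Everything else, including the passage to the limit, is a routine application of dominated convergence once the boundedness hypothesis on $G$ is exploited.
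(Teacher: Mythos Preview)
Your argument is correct, but it differs from what the paper does: the paper does not prove this theorem at all. It simply records it as a special case of Proposition~7.4.3 in Pfeffer's monograph \emph{The Divergence Theorem and Sets of Finite Perimeter}, noting that convex bodies are sets of bounded variation and that bounded vector fields which are locally Lipschitz away from a single point are \emph{admissible} in Pfeffer's sense. So the paper's ``proof'' is a one-line citation to heavier machinery.

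Your ball-excision route is the standard elementary way to obtain removable-singularity versions of the divergence theorem, and your treatment of the three limits (dominated convergence for $\operatorname{div} G$, dominated convergence on $\partial^\ast A$ using $\sup_A|G|$, and the $O(\varepsilon^{n-1})$ decay of the spherical contribution) is sound. The technical worry you raise about $A_\varepsilon$ when $a\in\partial A$ is real but easily bypassed: $A_\varepsilon$ is in any case a set of finite perimeter with reduced boundary equal $\mathcal{H}^{n-1}$-a.e.\ to the union you describe, so the BV/finite-perimeter divergence theorem applies directly without further smoothing. In short, the paper buys brevity by citing Pfeffer; your approach buys self-containment at the cost of a short extra paragraph.
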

This is a special case of the divergence theorem presented in Proposition 7.4.3 in \cite{pfeffer2012divergence} for sets of bounded variation and admissable vector fields. Locally Lipschitz vector fields and convex bodies satisfy these presumptions. \\
The next theorem is to some extend our main result as it relates
$\dcura_{K,\varphi,q}(\sph^{n-1}\cap L)$ to  $\dcura_{K,\varphi,q}(
\sph^{n-1})$ via Theorem \ref{theo: GG div}. All further results are
based on this.  

\begin{theo}\label{thm: Gauss div gen}
Let $K \in \Knull$, $L\subset \Rn$ be a proper subspace and $q > \dim
L$. Let $\varphi \in \HomF{q-n}$ be Lipschitz continuous on
$\sph^{n-1}$. Furthermore assume that for any $m \in \mathbb{N}_{\geq 1}$
\begin{equation*} 
\int_{e^{-\frac{1}{m}}K \vert L} \vert \langle \nabla
g_{K,L,\varphi,q}(x),x \rangle \vert \, d \mathcal{H}^{\dim L}(x) <
\infty.
\end{equation*}
Then it holds
\begin{align*}
    &\rat(\dcura_{K,\varphi,q},L)  \\= &\frac{\dim L}{q} +\frac{1}{n}\frac{1}{ \dcura_{K,\varphi, q}(\sph^{n-1})}
 \lim_{m \to \infty}\int_{e^{-1/m}K \vert L}\ip{\nabla
   g_{K,L,\varphi,q}(x)}{x} \, \dint \haus^{\dim L}(x).
\end{align*}
\end{theo}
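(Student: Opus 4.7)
The plan is to apply the generalized divergence theorem (Theorem \ref{theo: GG div}) to the vector field $G(y) = y\,g_{K,L,\varphi,q}(y)$ on the shrunken body $A_m := e^{-1/m}(K\vert L) \subset L$, pass to the limit $m\to\infty$, and then identify the resulting boundary integral with $n\,\dcura_{K,\varphi,q}(\sph^{n-1}\cap L)$ via a coarea argument. Throughout, let $P_L$ denote orthogonal projection onto $L$, and abbreviate $g := g_{K,L,\varphi,q}$.

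First I would verify the hypotheses of Theorem \ref{theo: GG div} for $A_m$ with the distinguished point $a = 0$. The vector field $G$ is bounded on $A_m$ since $g$ is bounded by Lemma \ref{lem: g contninuous interior} i) and $A_m$ is compact. As $e^{-1/m} < 1$ and $0 \in \inte(K\vert L)$, the set $A_m \setminus \{0\}$ is contained in $\inte(K\vert L) \setminus \{0\}$, where $g$ is locally Lipschitz by Proposition \ref{prop : g Lip, diff, grad bound} i); hence so is $G$. Finally, $\operatorname{div} G(y) = \dim L \cdot g(y) + \langle y,\nabla g(y)\rangle$ lies in $L^1(A_m)$: the first summand is bounded and the second is integrable by hypothesis.

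Applying Theorem \ref{theo: GG div} yields
\begin{equation*}
\int_{A_m}\bigl[\dim L\cdot g(y) + \langle y,\nabla g(y)\rangle\bigr]\,d\haus^{\dim L}(y) = \int_{\partial A_m} g(y)\langle y,\nu_{A_m}(y)\rangle\,d\haus^{\dim L - 1}(y).
\end{equation*}
Substituting $y = e^{-1/m}y_0$ with $y_0 \in \partial(K\vert L)$ rewrites the boundary integral as $e^{-\dim L/m}\int_{\partial(K\vert L)} g(e^{-1/m}y_0)\langle y_0,\nu_{K\vert L}(y_0)\rangle\,d\haus^{\dim L-1}(y_0)$. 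Lemma \ref{lem: g contninuous interior} iii) provides pointwise convergence $g(e^{-1/m}y_0) \to g(y_0)$, while i) supplies the dominating bound; hence the limit as $m\to\infty$ equals $\int_{\partial(K\vert L)} g(y_0)\langle y_0,\nu_{K\vert L}(y_0)\rangle\,d\haus^{\dim L-1}(y_0)$. On the left-hand side, monotone convergence combined with Fubini and Lemma \ref{lem : generalized dcm representation} turns $\dim L\cdot\int_{A_m} g\,d\haus^{\dim L}$ into $\dim L \cdot (n/q)\,\dcura_{K,\varphi,q}(\sph^{n-1})$; the remaining term $\int_{A_m}\langle y,\nabla g\rangle\,d\haus^{\dim L}$ therefore also admits a limit.

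The main obstacle is the identification
\begin{equation*}
\int_{\partial(K\vert L)} g(y)\langle y,\nu_{K\vert L}(y)\rangle\,d\haus^{\dim L-1}(y) = n\,\dcura_{K,\varphi,q}(\sph^{n-1}\cap L).
\end{equation*}
By Lemma \ref{lem: Cor 3.5 HLYZ}, the right-hand side equals $\int_{M}\varphi(x)\langle x,\nu_K(x)\rangle\,d\haus^{n-1}(x)$ with $M := \nu_K^{-1}(L\cap\sph^{n-1})$. For $\haus^{n-1}$-almost every smooth $x \in M$, the tangent space $\nu_K(x)^\perp$ contains $L^\perp$, so $\nu_K(x) = \nu_{K\vert L}(P_L(x))$ and $\langle x,\nu_K(x)\rangle = \langle P_L(x),\nu_{K\vert L}(P_L(x))\rangle$; the Lipschitz restriction $P_L\colon M \to \partial(K\vert L)$ has fibers $K_y = K\cap(y + L^\perp)$ and coarea Jacobian $1$ at such points, because its non-zero singular values all equal $1$. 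The coarea formula then converts the integral over $M$ into the iterated integral $\int_{\partial(K\vert L)}\langle y,\nu_{K\vert L}(y)\rangle g(y)\,d\haus^{\dim L-1}(y)$, establishing the identification. Plugging it back and dividing by $n\,\dcura_{K,\varphi,q}(\sph^{n-1})$ yields the theorem.
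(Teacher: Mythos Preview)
Your proposal is correct and follows essentially the same approach as the paper: apply the divergence theorem to $G(y)=y\,g(y)$ on the shrunken projection $e^{-1/m}(K\vert L)$, let $m\to\infty$ via Lemma \ref{lem: g contninuous interior}, and identify the boundary term with $n\,\dcura_{K,\varphi,q}(\sph^{n-1}\cap L)$ through Lemma \ref{lem: Cor 3.5 HLYZ}. The only noteworthy difference is that you make the identification step explicit via the coarea formula for $P_L\colon M\to\partial(K\vert L)$, whereas the paper passes directly from the iterated integral to $\int_M \varphi(z)\langle z\vert L,\nu_{K\vert L}(z\vert L)\rangle\,d\mathcal{H}^{n-1}(z)$ without spelling out why $d\mathcal{H}^{n-k}\otimes d\mathcal{H}^{k-1}$ agrees with $d\mathcal{H}^{n-1}$ on $M$; your Jacobian-$1$ observation is exactly the missing justification.
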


\begin{proof}
We follow the proof outline of Lemma 3.3 in
\cite{boroczky2014conevolume}. Let $\dim L=k$, and for $m \in
\mathbb{N}_{\geq 1}$ we set
\[ E_m= e^{-\frac{1}{m}} K \vert L. 
  \]
The relative boundary of $E_m$ with respect to the subspace $L$ will be denoted by $\bar{\partial}E_m$. We define the vector field $G: K \vert L \to \mathbb{R}^n$ by \[G(x)=g_{K,L,\varphi,q}(x)x.\]
By Proposition \ref{prop : g Lip, diff, grad bound}  ii) $G$ is almost everywhere differentiable on
$K|L$ and for its divergence $\operatorname{div}G$ we find 
\begin{equation} \label{eqn: div Formula}
\operatorname{div}G(x) = k g_{K,L,\varphi,q}(x)+ \langle \nabla
g_{K,L,\varphi,q}(x),x \rangle.
\end{equation}
Hence with (\ref{eq: connection slicing function dcm}) it follows
\begin{equation*}
  \begin{split} 
 \int_{E_m} &\vert \operatorname{div} G(x) \vert \,  d
 \mathcal{H}^{k}(x) \\ &\leq k\frac{n}{q}
 \tilde{C}_{K,\varphi,q}(\sph^{n-1})+ \int_{E_m} \vert \langle
 \nabla g_{K,L,\varphi,q}(x), x \rangle \vert \, d \mathcal{H}^{k}(x)
 < \infty.
 \end{split} 
\end{equation*}
So it holds $\operatorname{div} G\in L_1(E_m)$. Thus we may
apply the divergence theorem, i.e., Theorem \ref{theo: GG div} and get 
\begin{equation}\label{eqn: DivTheo for G on Em} 
    \int_{E_m} \operatorname{div}G(x) \, d \mathcal{H}^{k}(x) = \int_{\bar{\partial}^{*}E_m} \langle G(x), \nu_{E_m}(x) \rangle \, d \mathcal{H}^{k-1}(x).
\end{equation}
First we consider the right-hand side of \eqref{eqn: DivTheo for G on
  Em}.  For a regular boundary point $x \in
\bar{\partial}^{*}K \vert L$ we have  $\nu_{K \vert
  L}(x)=\nu_{E_m}(e^{-\frac{1}{m}}x)$ and so we  may write 
\begin{align*}
    \int_{\bar{\partial}^{*}E_m} \langle G(x), \nu_{E_m}(x) \rangle \, d \mathcal{H}^{k-1}(x)&=e^{-\frac{k-1}{m}}\int_{\bar{\partial}^*K \vert L} \langle G(e^{-\frac{1}{m}}x), \nu_{K \vert L}(x) \rangle \, d \mathcal{H}^{k-1}(x) \\
    &=e^{-\frac{k}{m}}\int_{\bar{\partial}^*K \vert L} g_{K,L,\varphi,q}(e^{-\frac{1}{m}}x)\langle x, \nu_{K \vert L}(x) \rangle \, d \mathcal{H}^{k-1}(x).
\end{align*}
By Lemma \ref{lem: g contninuous interior} iii)
we have $g_{K,L,\varphi,q}(e^{-\frac{1}{m}}x)\to g_{K,L,\varphi,q}(x)$
pointwise for $m \to \infty$ and with the Lebesgue dominated convergence theorem and
the 
definition of $g_{K,L,\varphi,q}(x)$ we
obtain
\begin{align*}
    \lim_{m \to \infty} \int_{\bar{\partial}^{*}E_m} & \langle G(x), \nu_{E_m}(x) \rangle \, d \mathcal{H}^{k-1}(x) \\
    =& \int_{\bar{\partial}^{*}K \vert L} g_{K,L,\varphi,q}(x)\langle x,
       \nu_{K \vert L}(x) \rangle \, d \mathcal{H}^{k-1}(x)\\
  =& \int_{\bar{\partial}^{*}K \vert L} \int_{K\cap (x+L^\perp)} \varphi(z) \langle x, \nu_{K \vert L}(x) \rangle \, d \mathcal{H}^{n-k}(z)\, d \mathcal{H}^{k-1}(x).
\end{align*}
Now set  $M=\partial K \cap(L^{\perp}+\bar{\partial}^{*}K\vert
L)$. Then the set of regular points in $M$ is precisely the set of all regular boundary points of
$K$ having their unique outer normal vector in $L\cap
\sph^{n-1}$. In view of Lemma \ref{lem: Cor 3.5 HLYZ} we get

\begin{align} \label{eqn: expr 1}
     \lim_{m \to \infty} \int_{\bar{\partial}^{*}E_m} \langle G(x), &\nu_{E_m}(x) \rangle \, d \mathcal{H}^{k-1}(x)\\
     =&\int_{M} \varphi(z)\langle z \vert L, \nu_{K \vert L}(z \vert L) \rangle \, d \mathcal{H}^{n-1}(z) \\
     =&\int_{\nu_K^{-1}(L\cap \sph^{n-1})} \varphi(z)\langle z, \nu_{K }(z) \rangle \, d \mathcal{H}^{n-1}(z) \\
     = & n \tilde{C}_{K,\varphi,q}(L \cap \sph^{n-1}).
\end{align}
Next we turn to the left-hand side of \eqref{eqn: DivTheo for G on Em}
which by \eqref{eqn: div Formula} is 
\begin{equation} \label{eqn: div form int}
 \begin{split}  
    \int_{E_m} &\operatorname{div}G(x) \, d \mathcal{H}^{k}(x) \\&=
    k\int_{E_m}g_{K,L,\varphi,q}(x) \, d \mathcal{H}^{k}(x) +
    \int_{E_m} \langle \nabla g_{K,L,\varphi,q}(x), x\rangle \, d
    \mathcal{H}^{k}(x).
\end{split}     
\end{equation} Again by the Lebesgue
dominated convergence theorem it
holds \begin{equation}
  \begin{split} 
    \lim_{m \to \infty} \int_{E_m}g_{K,L,\varphi,q}(x) \, d
    \mathcal{H}^{k}(x) & = \int_{K \vert L}g_{K,L,\varphi,q}(x) \, d
    \mathcal{H}^{k}(x) \\ &= \frac{n}{q} \tilde{C}_{K,\varphi,q}(
    \sph^{n-1}).\label{eqn: sequence 2 conv}
    \end{split} 
  \end{equation}
  Hence, by \eqref{eqn: DivTheo for G on Em} and  \eqref{eqn: expr 1}
 we finally get 
\begin{align*}
   \lim_{m \to \infty} &\int_{E_m} \langle \nabla g_{K,L,\varphi,q}(x), x\rangle \, d \mathcal{H}^{k}(x) \\ =&  \lim_{m \to \infty} \int_{\bar{\partial}^{*}E_m} \langle G(x), \nu_{E_m}(x) \rangle \, d \mathcal{H}^{k-1}(x) -k  \lim_{m \to \infty} \int_{E_m}g_{K,L,\varphi,q}(x) \, d \mathcal{H}^{k}(x) \\ =& n \tilde{C}_{K,\varphi,q}( L \cap \sph^{n-1})-k\frac{n}{q} \tilde{C}_{K,\varphi,q}(\sph^{n-1}). \qedhere
\end{align*}
\end{proof} 

Combined with Lemma \ref{prop : g Lip, diff, grad bound} iii) we
obtain a slight generalization of Theorem \ref{thm:Gaussdiv} as a corollary:
\begin{corollary}\label{cor : Gauss div thm q>k+1}
Let $K \in \Knull$, $L\subset \Rn$ be a proper subspace and $q > \dim L+1$. Let $\varphi \in \HomF{q-n}$ be Lipschitz continuous on $\sph^{n-1}$. Then it holds
\begin{align*}
    n\dcura_{K,\varphi,q}(\sph^{n-1}\cap L)
  &=\frac{n}{q}\dim(L)\dcura_{K,\varphi,q}( \sph^{n-1})\\ &+\int_{K \vert L}\langle \nabla g_{K,L,{\varphi},q}(x), x \rangle \, d \mathcal{H}^{\dim L}(x).
\end{align*}
\end{corollary}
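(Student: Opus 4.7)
The plan is to combine Theorem \ref{thm: Gauss div gen} with Proposition \ref{prop : g Lip, diff, grad bound} iii), and then pass the limit in $m$ inside the integral using dominated convergence.

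First I would verify that the integrability hypothesis of Theorem \ref{thm: Gauss div gen} is met. Since $q > \dim L + 1$, Proposition \ref{prop : g Lip, diff, grad bound} iii) yields
\begin{equation*}
\int_{K \vert L} \vert \langle \nabla g_{K,L,\varphi,q}(x),x \rangle \vert \, d \mathcal{H}^{\dim L}(x) < \infty.
\end{equation*}
Because $e^{-1/m} K \vert L \subseteq K \vert L$ for every $m \geq 1$, the same integral over $e^{-1/m} K \vert L$ is also finite, so Theorem \ref{thm: Gauss div gen} applies and gives
\begin{equation*}
\rat(\dcura_{K,\varphi,q},L) = \frac{\dim L}{q} + \frac{1}{n\,\dcura_{K,\varphi,q}(\sph^{n-1})} \lim_{m \to \infty} \int_{e^{-1/m}K \vert L} \langle \nabla g_{K,L,\varphi,q}(x), x\rangle \, d \mathcal{H}^{\dim L}(x).
\end{equation*}

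Next I would remove the limit. The sets $e^{-1/m} K \vert L$ exhaust $\inte(K\vert L)$, and the boundary $\bar\partial (K\vert L)$ has $\haus^{\dim L}$-measure zero; hence the indicator functions $\mathbf{1}_{e^{-1/m}K\vert L}$ converge to $\mathbf{1}_{K\vert L}$ pointwise almost everywhere. Since $|\langle \nabla g_{K,L,\varphi,q}(x),x\rangle|$ is an integrable dominating function on $K \vert L$ by Proposition \ref{prop : g Lip, diff, grad bound} iii), Lebesgue's dominated convergence theorem yields
\begin{equation*}
\lim_{m \to \infty} \int_{e^{-1/m} K \vert L} \langle \nabla g_{K,L,\varphi,q}(x), x\rangle \, d \mathcal{H}^{\dim L}(x) = \int_{K \vert L} \langle \nabla g_{K,L,\varphi,q}(x), x\rangle \, d \mathcal{H}^{\dim L}(x).
\end{equation*}

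Finally, recalling that $\rat(\dcura_{K,\varphi,q},L) = \dcura_{K,\varphi,q}(\sph^{n-1}\cap L)/\dcura_{K,\varphi,q}(\sph^{n-1})$, I would multiply the identity from Theorem \ref{thm: Gauss div gen} through by $n\,\dcura_{K,\varphi,q}(\sph^{n-1})$ to obtain exactly the claimed equation. There is no real obstacle here; the only point requiring some care is ensuring that the pointwise convergence of the indicators holds almost everywhere, which follows since $K \vert L \in \mathcal{K}^{\dim L}$ and hence its relative boundary is $\haus^{\dim L}$-negligible.
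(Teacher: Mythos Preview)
Your proposal is correct and follows the same approach the paper intends: the paper simply states that the corollary follows by combining Theorem \ref{thm: Gauss div gen} with Proposition \ref{prop : g Lip, diff, grad bound} iii), and you have spelled out the dominated convergence step needed to replace the limit over $e^{-1/m}K\vert L$ by the integral over $K\vert L$. There is nothing to add.
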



\section{Proof of Theorem \ref{theo:maingeneralbodies} and
  \ref{thm:boundssection}} \label{sec : 4}
Here we return to the function $\varphi=|\cdot|^{q-n}$ and
depending on $q$ we have to distinguish the quasiconcave range
$q\leq n$ and the convex range  $q\geq n+1$ of this
function. The quasiconvex range $n<q<n+1$ remains open.  

Now let $K\in\Knull$, $\gamma\in [0,1]$ such that $\gamma (-K)\subseteq
K$. In order to exploit this fact for our purposes
we note that for any $x\in K|L$
\begin{equation*}
   -\gamma \left(K\cap (x+L^\perp)\right) \subseteq K\cap \left(-\gamma x+L^\perp\right).
\end{equation*}   
 Hence for any $\lambda\in [0,1]$ we have
\begin{equation} \label{eqn:inclusion_centered} 
  \left(\frac{\gamma+\lambda}{1+\gamma}\right) K_x
  +\left(\frac{(1-\lambda)\gamma}{1+\gamma}\right) (-K_{x})\subseteq
  K_{\lambda x} 
\end{equation} 
where we set $K_y=K\cap (y+L^\perp)$ for $y\in K|L$. Observe that the
left-hand side is in general strictly larger than $\lambda K_x$ which
is contained in $K_{\lambda x}$ as $0\in K$.



\subsection{The quasiconcave range $q\leq n$}
First we state a lemma from \cite{boroczky2016cone} in a different but
equivalent form.  

\begin{lemma}\label{lem: Anderson for quasiconcave}
Let $K \in \Kn$ with $\dim(K)=k$. Let $\varphi \in \HomF{p}$ be quasiconcave, even and integrable on $k$-dimensional compact convex sets. Then it holds for $\lambda_0,\lambda_1 > 0$ 
$$ \int_{\lambda_0 K+ \lambda_1 (-K)} \varphi(z) \, d \mathcal{H}^{k}(z) \geq (\lambda_0+\lambda_1)^{p+k} \int_K \varphi(z) \, d \mathcal{H}^{k}(z) .$$ \end{lemma}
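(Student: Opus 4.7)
The plan is to reduce to the normalized case $\lambda_0+\lambda_1=1$ by a scaling argument, and then deduce the resulting inequality from the layer cake representation combined with the classical Brunn-Minkowski inequality, exploiting evenness of $\varphi$ to guarantee that its superlevel sets are origin-symmetric convex bodies.

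First I would set $L=\lambda_0+\lambda_1$, $t=\lambda_0/L\in[0,1]$, and $M_t=tK+(1-t)(-K)$, so that $\lambda_0 K+\lambda_1(-K)=LM_t$. The substitution $z=Lw$ together with $\varphi\in H(p,k)$ gives
\begin{equation*}
\int_{\lambda_0 K+\lambda_1(-K)}\varphi(z)\,d\mathcal{H}^k(z)=L^{p+k}\int_{M_t}\varphi(w)\,d\mathcal{H}^k(w),
\end{equation*}
so the task reduces to proving $\int_{M_t}\varphi\,d\mathcal{H}^k\ge\int_K\varphi\,d\mathcal{H}^k$ for every $t\in[0,1]$.

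Next I would invoke the layer cake formula, writing
\begin{equation*}
\int_A\varphi\,d\mathcal{H}^k=\int_0^\infty\mathcal{H}^k(A\cap S_s)\,ds,\qquad S_s=\{x:\varphi(x)>s\}.
\end{equation*}
Since $\varphi$ is quasiconcave and even, each $S_s$ is an origin-symmetric convex set, so $(-K)\cap S_s=-(K\cap S_s)$. Writing $A_s:=K\cap S_s$, convexity of $S_s$ yields
\begin{equation*}
tA_s+(1-t)(-A_s)\subseteq S_s\cap\bigl(tK+(1-t)(-K)\bigr)=S_s\cap M_t.
\end{equation*}
The Brunn-Minkowski inequality then gives
\begin{equation*}
\mathcal{H}^k(M_t\cap S_s)^{1/k}\ge t\,\mathcal{H}^k(A_s)^{1/k}+(1-t)\,\mathcal{H}^k(-A_s)^{1/k}=\mathcal{H}^k(A_s)^{1/k},
\end{equation*}
hence $\mathcal{H}^k(M_t\cap S_s)\ge\mathcal{H}^k(K\cap S_s)$ for each $s>0$. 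Integrating in $s$ recovers $\int_{M_t}\varphi\,d\mathcal{H}^k\ge\int_K\varphi\,d\mathcal{H}^k$, and combined with the scaling identity this yields the claim.

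I do not expect a serious obstacle: the only structural ingredient is evenness, which makes the superlevel sets symmetric and hence makes the Minkowski sum $tA_s+(1-t)(-A_s)$ fit inside $S_s$; without evenness one would only have $tA_s+(1-t)A_s=A_s$, which would just give the trivial lower bound. A minor care point is to ensure the layer cake decomposition is justified when $p<0$ and $\varphi$ blows up at the origin, but the hypothesis that $\varphi$ is integrable on compact convex sets together with non-negativity makes Tonelli's theorem applicable.
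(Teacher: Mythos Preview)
Your proposal is correct and follows the same route as the paper: reduce by scaling to the normalized case $\lambda_0+\lambda_1=1$, then invoke the Anderson-type inequality $\int_{tK+(1-t)(-K)}\varphi\ge\int_K\varphi$ for even quasiconcave $\varphi$. The only difference is that the paper cites this normalized inequality as Lemma~3.1 of \cite{boroczky2016cone}, whereas you supply a self-contained proof of it via the layer-cake decomposition and Brunn--Minkowski, which is precisely the standard argument behind that lemma.
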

\begin{proof}
Setting $\lambda=\frac{\lambda_0}{\lambda_0+\lambda_1}\in[0,1]$ we get 
\begin{align*}
    \int_{\lambda_0 K+ \lambda_1 (-K)} \varphi(z) \, d \mathcal{H}^{k}(z) =& (\lambda_0+\lambda_1)^{p+k}  \int_{\lambda K+ (1-\lambda) (-K)} \varphi(z) \, d \mathcal{H}^{k}(z) \\
    \geq& (\lambda_0+\lambda_1)^{p+k} \int_K \varphi(z) \, d \mathcal{H}^{k}(z),
\end{align*}
where for the last inequality we use Lemma
3.1. in \cite{boroczky2016cone}. 
\end{proof}

\begin{lemma}\label{lem: bound on gradient, quasiconcave case}
   Let $K \in \Knull$, $\gamma\in [0,1]$ such that $\gamma (- K) \subseteq
  K$, $L \subset\mathbb{R}^n$ be a proper subspace and let
  $\dim(L)< q\leq n$. Let $x\in\inte K|L$ and assume that  $\nabla
  g_{K,L,q}(x)$ exists. Then it holds
\begin{align*}
    \langle \nabla g_{K,L,q}(x),x \rangle \leq
  \frac{1-\gamma}{\gamma+1}\, (q-\dim(L))\,g_{K,L,q}(x).
\end{align*}
\end{lemma}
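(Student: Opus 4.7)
The plan is to restrict $g_{K,L,q}$ to the ray from $0$ through $x$, compare the one-variable restriction with an explicit power function, and read off the gradient bound by differentiating at the endpoint $\lambda = 1$.

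Writing $K_y := K \cap (y+L^{\perp})$ for $y \in K|L$ and $h(\lambda) := g_{K,L,q}(\lambda x)$ for $\lambda \in [0,1]$, I would first invoke the inclusion \eqref{eqn:inclusion_centered},
\[
  \tfrac{\gamma+\lambda}{1+\gamma}\,K_x \,+\, \tfrac{(1-\lambda)\gamma}{1+\gamma}\,(-K_x) \;\subseteq\; K_{\lambda x},
\]
and, using the nonnegativity of $|\cdot|^{q-n}$, lower-bound $h(\lambda)$ by the integral of $|\cdot|^{q-n}$ over this Minkowski sum.

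Next I would apply Lemma \ref{lem: Anderson for quasiconcave} to the $(n-\dim L)$-dimensional convex body $K_x$. For $q\leq n$ the integrand $|\cdot|^{q-n}\in \HomF{q-n}$ is even and quasiconcave (its superlevel sets are origin-centered Euclidean balls), so the hypotheses are met, and with the exponent identity $(q-n)+(n-\dim L)=q-\dim L$ this yields
\[
  h(\lambda) \;\geq\; B(\lambda)\,h(1), \qquad B(\lambda) := \left(\tfrac{2\gamma+\lambda(1-\gamma)}{1+\gamma}\right)^{q-\dim L}.
\]
Since $B(1)=1$ the inequality is an equality at $\lambda = 1$.

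Finally I would differentiate the nonnegative function $\lambda\mapsto h(\lambda)-B(\lambda)h(1)$ at its zero $\lambda=1$. The hypothesis that $\nabla g_{K,L,q}(x)$ exists together with the chain rule gives $h'(1)=\langle \nabla g_{K,L,q}(x),x\rangle$, and because a differentiable nonnegative function vanishing at $\lambda=1$ has left derivative $\leq 0$ there, $h'(1)\leq B'(1)h(1)$. The elementary computation $B'(1)=(q-\dim L)\tfrac{1-\gamma}{1+\gamma}$ then delivers the claimed bound. I do not foresee a serious obstacle. The only technical point is that Lemma \ref{lem: Anderson for quasiconcave} requires $\lambda_0,\lambda_1>0$; this holds for all $\lambda\in[0,1)$ when $\gamma>0$, and the endpoint $\lambda=1$ contributes only the trivial equality $h(1)=h(1)$. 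The extreme case $\gamma=0$ is handled directly: the inclusion reduces to $\lambda K_x\subseteq K_{\lambda x}$ and homogeneity of $|\cdot|^{q-n}$ gives $h(\lambda)\geq \lambda^{q-\dim L}h(1)$, yielding the same final inequality.
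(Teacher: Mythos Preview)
Your proposal is correct and follows essentially the same route as the paper: both use the inclusion \eqref{eqn:inclusion_centered}, apply Lemma~\ref{lem: Anderson for quasiconcave} to obtain $g_{K,L,q}(\lambda x)\geq (\lambda_0+\lambda_1)^{q-\dim L}g_{K,L,q}(x)$, and then differentiate at $\lambda=1$. The paper writes $\lambda=1-\varepsilon$ and passes to the limit in the difference quotient, while you phrase the same step as the left derivative of the nonnegative function $h-B\,h(1)$ at its zero; these are identical computations, and your $B(\lambda)$ is exactly the paper's $(\lambda_0+\lambda_1)^{q-\dim L}$ rewritten. Your explicit treatment of $\gamma=0$ (where $\lambda_1=0$ and Lemma~\ref{lem: Anderson for quasiconcave} does not literally apply) is a small bonus the paper omits.
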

\begin{proof}
For $y\in K|L$ let  $K_y=K \cap (y+L^\perp)$, let $\lambda\in[0,1)$ and
set 
   $\lambda_0=\frac{\gamma +\lambda}{\gamma+1}$ and $\lambda_1=\gamma
   \frac{1-\lambda}{\gamma+1}$. With \eqref{eqn:inclusion_centered} we
   have
   $$
   \lambda_0 K_x + \lambda_1 (-K_x) \subseteq K_{\lambda x}
   $$
and Lemma~\ref{lem: Anderson for quasiconcave} gives 
\begin{align*}
    g_{K,L,q}(\lambda x)=&\int_{K_{\lambda x}} \vert z \vert ^{q-n} \, d \mathcal{H}^{n-\dim L}(z) \\
    & \geq\int_{\lambda_0 K_x + \lambda_1\left(-K_{x}\right)} \vert z \vert ^{q-n} \, d \mathcal{H}^{n-\dim L}(z) \\
    & \geq \left(\lambda_0+\lambda_1\right)^{n-\dim(L)+q-n}\int_{K_x} \vert z \vert ^{q-n} \, d \mathcal{H}^{n-\dim L}(z) \\
    & =  \left(1+(1-\lambda) \frac{\gamma-1}{\gamma + 1}\right)^{q-\dim(L)} g_{K,L,q}(x).
\end{align*}
Setting $\lambda=1-\varepsilon$ yields
\begin{align}\label{eqn: proof grad quasiconcave eq1}
    \frac{g_{K,L,q}((1-\varepsilon)x)-g_{K,L,q}(x)}{\varepsilon} \geq \frac{f(\varepsilon)-f(0)}{\varepsilon} g_{K,L,q}(x),
\end{align}
with $f(t)=\left(\frac{\gamma-1}{\gamma+1}t
  +1\right)^{q-\dim(L)}$. Hence,
\begin{align*}
    \langle \nabla g_{K,L,q}(x), -x \rangle &= \lim_{\varepsilon \to 0}\frac{g_{K,L,q}((1-\varepsilon)x)-g_{K,L,q}(x)}{\varepsilon} \\ 
    &\geq f'(0)\,g_{K,L,q}(x) \\
    &=\frac{\gamma-1}{\gamma+1}\,(q-\dim(L))\,g_{K,L,q}(x). \qedhere
\end{align*}
\end{proof}
\begin{rem}
Lemma \ref{lem: bound on gradient, quasiconcave case} also holds for the section function $g_{K,L,\varphi,q}$ when $\varphi \in \HomF{q-n}$ is quasiconcave and even.
\end{rem}

\subsection{The convex setting in the range $q \geq n+1$}
\begin{lemma}\label{lem: Anderson convex}
Let $K_0, K_1 \in \Knull$ with $\dim(K_i)=k$, $\vol(K_0)=\vol (K_1)$ and assume that their affine hulls are parallel. For $\lambda_0, \lambda_1 >0 $ and $p \geq 1$, it holds
\begin{align*}
    &\int_{\lambda_0 K_0 +\lambda_1K_1} \vert z \vert ^p \, d \mathcal{H}^{k}(z) +
\int_{\lambda_0 K_1 +\lambda_1 K_0} \vert z \vert ^p \, d \mathcal{H}^{k}(z) \\ \geq & (\lambda_0+\lambda_1)^k \vert \lambda_0 -\lambda_1 \vert ^p  \left(\int_{K_0} \vert z \vert ^p \, d \mathcal{H}^{k}(z) + \int_{K_1} \vert z \vert ^p \, d \mathcal{H}^{k}(z)\right).
\end{align*}
\end{lemma}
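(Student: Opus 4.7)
The plan is to establish the inequality by combining a pointwise estimate with a Fubini computation and the Brunn--Minkowski inequality.

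First, I would prove the pointwise inequality: for $p \geq 1$, $\lambda_0 \geq \lambda_1 \geq 0$, and any vectors $x,y$,
\begin{equation*}
  \vert \lambda_0 x + \lambda_1 y \vert^p + \vert \lambda_1 x + \lambda_0 y \vert^p \geq (\lambda_0 - \lambda_1)^p (\vert x \vert^p + \vert y \vert^p).
\end{equation*}
Setting $a = \lambda_0 x + \lambda_1 y$ and $b = \lambda_1 x + \lambda_0 y$, one has $x = (\lambda_0 a - \lambda_1 b)/(\lambda_0^2 - \lambda_1^2)$ and $y$ arises by swapping $a$ and $b$. Writing $(\lambda_0 a - \lambda_1 b)/(\lambda_0+\lambda_1)$ as the convex combination $\tfrac{\lambda_0}{\lambda_0+\lambda_1}a + \tfrac{\lambda_1}{\lambda_0+\lambda_1}(-b)$ and applying convexity together with the evenness of $\vert \cdot \vert^p$ yields $(\lambda_0-\lambda_1)^p(\vert x \vert^p + \vert y \vert^p) \leq \vert a \vert^p + \vert b \vert^p$.

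Second, integrating this pointwise bound over $K_0 \times K_1$ with respect to the $k$-dimensional Hausdorff measures on the two parallel affine hulls, and using $\vol(K_0)=\vol(K_1)=V$, gives
\begin{equation*}
  \iint_{K_0 \times K_1}\bigl(\vert \lambda_0 x + \lambda_1 y \vert^p + \vert \lambda_1 x + \lambda_0 y \vert^p\bigr) \dint \haus^k(x)\dint \haus^k(y) \geq V(\lambda_0-\lambda_1)^p\bigl(M_p(K_0) + M_p(K_1)\bigr),
\end{equation*}
with $M_p(K) := \int_K \vert z \vert^p \dint \haus^k(z)$.

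Third, to convert the product integral on the left into the moments $M_p(A) + M_p(B)$ (where $A := \lambda_0 K_0 + \lambda_1 K_1$ and $B := \lambda_1 K_0 + \lambda_0 K_1$), I would apply Fubini to write $\iint_{K_0 \times K_1} \vert \lambda_0 x + \lambda_1 y \vert^p \dint\haus^k(x) \dint\haus^k(y) = (\lambda_0 \lambda_1)^{-k} \int_A \vert z \vert^p w(z) \dint\haus^k(z)$, where $w(z) := \vol(\lambda_0 K_0 \cap (z - \lambda_1 K_1))$ is supported on $A$, has total mass $(\lambda_0 \lambda_1)^k V^2$, and is $1/k$-concave on $A$ by Brunn's theorem; the analogous identity holds for $B$. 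A covariance-type estimate of the form $\int_A \vert z \vert^p w \leq \bigl(\int_A w / \vol A\bigr) M_p(A)$ (and its counterpart on $B$), combined with the Brunn--Minkowski bound $\vol(A), \vol(B) \geq (\lambda_0+\lambda_1)^k V$ (which is where the hypothesis $\vol(K_0) = \vol(K_1)$ enters), would then produce exactly the factor $(\lambda_0+\lambda_1)^k (\lambda_0-\lambda_1)^p$ required by the lemma.

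The hard part lies in the last step: justifying the covariance estimate that the $w$-weighted mean of $\vert z \vert^p$ on $A$ does not exceed its uniform mean. Geometrically this is plausible since $w$ concentrates near the centroid of $A$ while $\vert z \vert^p$ attains its minimum at the origin, which lies in $\inte A$ by virtue of $K_0, K_1 \in \Knull$; but a clean dimension-free argument likely requires a reduction via Steiner symmetrization or an Anderson-type rearrangement to the case where both bodies are radially symmetric about the origin, where the anti-correlation becomes manifest.
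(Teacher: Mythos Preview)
Your first two steps are correct: the pointwise bound
$|\lambda_0 x+\lambda_1 y|^p+|\lambda_1 x+\lambda_0 y|^p\ge|\lambda_0-\lambda_1|^p(|x|^p+|y|^p)$
follows from convexity exactly as you indicate, and integrating it over $K_0\times K_1$ gives the stated lower bound. The genuine gap is the covariance estimate in your third step, which you yourself flag as ``the hard part'' but do not prove.

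The inequality $\int_A |z|^p\,w\le\bigl(\int_A w/\vol A\bigr)M_p(A)$ asserts that the convolution weight $w$ and the convex function $|z|^p$ are negatively correlated under Lebesgue measure on $A$. This is not a standard fact, and there is no evident reason it should hold here: $w$ concentrates near $\lambda_0 c_0+\lambda_1 c_1$ (with $c_i$ the centroid of $K_i$), whereas $|z|^p$ is minimised at the origin, and nothing in the hypotheses ties these two points together. Indeed $\lambda_0 c_0+\lambda_1 c_1$ need not even equal the centroid of $A$, since the centroid is not Minkowski-additive. Your proposed reduction via Steiner symmetrisation or an Anderson-type rearrangement does not obviously apply---Anderson's lemma and its relatives require the two functions to share a centre of symmetry, which is precisely what is missing. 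A layer-cake argument through the superlevel sets $\{w>t\}$ also fails, since the average of a convex function over a convex subset of $A$ can exceed its average over all of $A$. (Note too that the hypothesis $K_0,K_1\in\Knull$ is not actually available in the paper's application of this lemma: there one takes $K_0=K_x$ and $K_1=-K_x$, which for $x\neq 0$ do not contain the origin, so you cannot lean on it.)

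For comparison, the paper's own proof is a two-line rescaling $\lambda=\lambda_0/(\lambda_0+\lambda_1)$ followed by a citation of Theorem~1.9 in \cite{henk2017necessary}; all the content resides in that external result, whose proof does not go through a product-integral decomposition of this kind.
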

\begin{proof}
Setting $\lambda= \frac{\lambda_0}{\lambda_0+\lambda_1}$ and applying Theorem 1.9 in \cite{henk2017necessary}, it holds \begin{align*}
    &\int_{\lambda_0 K_0 +\lambda_1K_1} \vert z \vert ^p \, d \mathcal{H}^{k}(z) +
\int_{\lambda_0 K_1 +\lambda_1 K_0} \vert z \vert ^p \, d \mathcal{H}^{k}(z) \\=& (\lambda_0+\lambda_1)^{p+k}\left(\int_{\lambda K_0 +(1-\lambda)K_1} \vert z \vert ^p \, d \mathcal{H}^{k}(z) +
\int_{\lambda K_1 +(1-\lambda) K_0} \vert z \vert ^p \, d \mathcal{H}^{k}(z) \right) \\
\geq &(\lambda_0+\lambda_1)^{p+k} \vert 2 \lambda -1\vert^p  \left(\int_{K_0} \vert z \vert ^p \, d \mathcal{H}^{k}(z) + \int_{K_1} \vert z \vert ^p \, d \mathcal{H}^{k}(z)\right) \\
=&(\lambda_0+\lambda_1)^k \vert \lambda_0 -\lambda_1 \vert ^p  \left(\int_{K_0} \vert z \vert ^p \, d \mathcal{H}^{k}(z) + \int_{K_1} \vert z \vert ^p \, d \mathcal{H}^{k}(z)\right). \qedhere
\end{align*}
\end{proof} 

\begin{lemma}\label{lem: bound on gradient in convex case}
  Let $K \in \Knull$, $\gamma\in [0,1]$ such that $\gamma (-K)\subseteq
  K$, $L \subset\mathbb{R}^n$ be a proper subspace and let
  $q \geq n+1$. Let $x\in\inte K|L$ and assume that  $\nabla
  g_{K,L,q}(x)$ exists. Then it holds
\begin{align*}
    \langle \nabla g_{K,L,q}(x),x \rangle \leq \left((q-n)+  \frac{1-\gamma}{\gamma+1}(n-\dim(L))\right)g_{K,L,q}(x).
\end{align*}
\end{lemma}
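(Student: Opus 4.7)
The plan is to follow the same scheme as in the proof of Lemma~\ref{lem: bound on gradient, quasiconcave case}, replacing Lemma~\ref{lem: Anderson for quasiconcave} by Lemma~\ref{lem: Anderson convex}. Fix $x\in\inte(K\vert L)$ with $x\neq 0$ and write $K_y=K\cap(y+L^\perp)$. For $\lambda\in(0,1]$, set
\begin{equation*}
\lambda_0=\frac{\gamma+\lambda}{1+\gamma},\qquad \lambda_1=\frac{(1-\lambda)\gamma}{1+\gamma},
\end{equation*}
so that $\lambda_0-\lambda_1=\lambda$ and $\lambda_0+\lambda_1=(2\gamma+\lambda(1-\gamma))/(1+\gamma)$. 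The inclusion~\eqref{eqn:inclusion_centered} becomes
\begin{equation*}
\lambda_0 K_x+\lambda_1(-K_x)\subseteq K_{\lambda x},
\end{equation*}
which immediately yields $\int_{\lambda_0 K_x+\lambda_1(-K_x)}\vert z\vert^{q-n}\,d\haus^{n-\dim L}(z)\leq g_{K,L,q}(\lambda x)$.

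The key step is to apply Lemma~\ref{lem: Anderson convex} to $K_0:=K_x$ and $K_1:=-K_x$ with exponent $p=q-n\geq 1$: both are $(n-\dim L)$-dimensional convex bodies of equal volume lying in parallel affine subspaces. The crucial observation is the reflection identity
\begin{equation*}
\lambda_0(-K_x)+\lambda_1 K_x=-\bigl(\lambda_0 K_x+\lambda_1(-K_x)\bigr),
\end{equation*}
so that evenness of $z\mapsto \vert z\vert^{q-n}$ forces the two integrals on the left-hand side of Lemma~\ref{lem: Anderson convex} to be equal; likewise $\int_{-K_x}\vert z\vert^{q-n}\,d\haus^{n-\dim L}(z)=g_{K,L,q}(x)$. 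The lemma therefore delivers
\begin{equation*}
g_{K,L,q}(\lambda x)\geq h(\lambda)\,g_{K,L,q}(x),\qquad h(\lambda):=\left(\frac{2\gamma+\lambda(1-\gamma)}{1+\gamma}\right)^{n-\dim L}\lambda^{q-n},
\end{equation*}
with $h(1)=1$.

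Setting $\lambda=1-\varepsilon$, dividing by $\varepsilon>0$, and letting $\varepsilon\to 0^+$ exactly as in the proof of Lemma~\ref{lem: bound on gradient, quasiconcave case} then gives $\langle\nabla g_{K,L,q}(x),-x\rangle\geq -h'(1)\,g_{K,L,q}(x)$, i.e.
\begin{equation*}
\langle\nabla g_{K,L,q}(x),x\rangle\leq h'(1)\,g_{K,L,q}(x).
\end{equation*}
A logarithmic differentiation gives $h'(1)=h(1)\bigl[(n-\dim L)(1-\gamma)/(1+\gamma)+(q-n)\bigr]$, which is exactly the asserted constant. The only real subtlety is the reflection trick: it is what turns the two-term inequality of Lemma~\ref{lem: Anderson convex} into a one-sided lower bound on a single section-integral despite $K$ not being centrally symmetric. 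Once this is in place, everything else is an elementary derivative computation identical in form to the quasiconcave case.
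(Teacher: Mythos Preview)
Your proof is correct and follows essentially the same approach as the paper's own argument: the same inclusion~\eqref{eqn:inclusion_centered}, the same application of Lemma~\ref{lem: Anderson convex} with $K_0=K_x$ and $K_1=-K_x$, and the same limiting differentiation at $\lambda=1$. The only difference is presentational: you make the reflection identity $\lambda_0(-K_x)+\lambda_1 K_x=-(\lambda_0 K_x+\lambda_1(-K_x))$ explicit to pass from the two-term inequality of Lemma~\ref{lem: Anderson convex} to a one-sided bound, whereas the paper leaves this step implicit; and you compute $h'(1)$ by logarithmic differentiation rather than writing down $f'(0)$ directly.
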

\begin{proof}
For $y\in K|L$ let  $K_y=K \cap (y+L^\perp)$, let $\lambda\in[0,1)$ and
set 
   $\lambda_0=\frac{\gamma +\lambda}{\gamma+1}$ and $\lambda_1=\gamma
   \frac{1-\lambda}{\gamma+1}$. With \eqref{eqn:inclusion_centered} we
   have
   $$
   \lambda_0 K_x + \lambda_1 (-K_x) \subseteq K_{\lambda x}
   $$
and Lemma~\ref{lem: Anderson convex} gives for $K_0=K_x$, $K_1=-K_x$
\begin{align*}
    g_{K,L,q}(\lambda x)=&\int_{K_{\lambda x}} \vert z \vert^{q-n} \, d \mathcal{H}^{n-\dim L}(z) \\
    & \geq\int_{\lambda_0 K_x + \lambda_1\left(-K_{x}\right)} \vert z \vert^{q-n} \, d \mathcal{H}^{n-\dim L}(z) \\
    & \geq (\lambda_0+\lambda_1)^{n-\dim(L)}\vert \lambda_0-\lambda_1\vert^{q-n}\int_{K_x} \vert z \vert \, d \mathcal{H}^{n-\dim L}(z) \\
    & = \left(1+(1-\lambda)\frac{\gamma-1}{\gamma+1}\right)^{n-\dim(L)} \lambda^{q-n}g_{K,L,q}(x).
\end{align*}
Setting $\lambda=1- \varepsilon$ it follows \begin{align}\label{eqn: proof grad quasicvx eq2}
    \frac{g_{K,L,q}((1-\varepsilon)x)-g_{K,L,q}(x)}{\varepsilon} \geq \frac{f(\varepsilon)-f(0)}{\varepsilon} g_{K,L,q}(x)
\end{align} with $  f(t) =(1-t)^{q-n}\left(\frac{\gamma-1}{\gamma+1}t +1\right)^{n-\dim(L)}$. Hence,
\begin{align*}
    \langle \nabla g_{K,L,q}(x), -x \rangle &=\lim_{\varepsilon \to 0}\frac{g_{K,L,q}((1-\varepsilon)x)-g_{K,L,q}(x)}{\varepsilon}\\ &\geq  f'(0)g_{K,L,q}(x) \\
    &=\left(-(q-n)+ \frac{\gamma-1}{\gamma+1}(n-\dim(L))\right)g_{K,L,q}(x). \qedhere
\end{align*} 
\end{proof} 
\begin{proof}[Proof of Theorem \ref{thm:boundssection}]
Lemma \ref{lem : generalized dcm representation} combined with Lemma \ref{lem: bound on gradient, quasiconcave case} and Lemma \ref{lem: bound on gradient in convex case} respectively yields the bounds
\begin{align*}
  \int_{K \vert L}\ip{\nabla
   g_{K,L,q}(x)}{x} \, \dint \haus^{\dim L}(x) \leq  c_q \int_{K \vert L} g_{K,L,q}(x)  \, \dint \haus^{\dim L}(x) =c_q\frac{n}{q}\dcura_{K,q}(\sph^{n-1}) \, 
   \end{align*} with
   \begin{align*}
  c_q =\begin{cases}
    &(q-\dim(L))\frac{1-\gamma}{1+\gamma}, q\leq n, 
    \\[1ex]
    &(q-n)+\frac{1-\gamma}{1+\gamma}(n- \dim(L)), q>n+1. \qedhere
   \end{cases}    
\end{align*}
\end{proof}
\begin{proof}[Proof of Theorem \ref{theo:maingeneralbodies}]
The claim follows as a simple corollary of combining Theorem \ref{thm:Gaussdiv} and the bounds for the directional derivative, i.e., Theorem \ref{thm:boundssection}.
\end{proof}
\section*{Acknowledgements}
The authors thank Christian Kipp for his helpful comments and suggestions.
\bibliographystyle{abbrv}
\bibliography{biblio}

\end{document}